\theoremstyle{plain}
\newtheorem{thm}{Theorem}[section]
\newtheorem{lemma}[thm]{Lemma}
\newtheorem{cor}[thm]{Corollary}
\newtheorem{prop}[thm]{Proposition}
\theoremstyle{definition}
\newtheorem{defn}[thm]{Definition}
\theoremstyle{remark}
\newtheorem{rem}[thm]{Remark}
\newtheorem*{remark*}{Remark}
\numberwithin{equation}{section}
\newcommand{\xb}{{\bf x}}
        \newcommand{\field}[1]{{\mathbb{#1}}}
        \newcommand{\RR}{\field{R}}
\begin{document}
\title[Accurate semiclassical spectral asymptotics]{Accurate semiclassical spectral asymptotics for a
two-dimensional magnetic Schr\"odinger operator}

\author{Bernard Helffer}

\address{D\'epartement de Math\'ematiques, B\^atiment 425, Univ
Paris-Sud et CNRS, F-91405 Orsay C\'edex, France}
\email{Bernard.Helffer@math.u-psud.fr}

\author{Yuri A. Kordyukov }
\address{Institute of Mathematics, Russian Academy of Sciences, 112 Chernyshevsky
str. 450008 Ufa, Russia} \email{yurikor@matem.anrb.ru}

\thanks{B.H. is partially supported by INSMI
CNRS and by the ANR programme Nosevol. Y.K. is partially supported
by the Russian Foundation of Basic Research, projects 12-01-00519-a
and 13-01-91052-NCNI-a, and the Ministry of education and science of
Russia, project 14.B37.21.0358. }

\subjclass{Primary: 35P20; Secondary:  35J10, 58J50, 81Q10}

\begin{abstract}
We revisit the problem of semiclassical spectral asymptotics for a
pure magnetic Schr\"odinger operator on a two-dimensional Riemannian
manifold. We suppose that the minimal value $b_0$ of the intensity
of the magnetic field is strictly positive, and the corresponding
minimum is unique and non-degenerate. The purpose is to get the
control on the spectrum in an interval $(hb_0, h(b_0 +\gamma_0)]$
for some $\gamma_0>0$ independent of the semiclassical parameter
$h$. The previous papers by Helffer-Mohamed and by Helffer-Kordyukov
were only treating the ground-state energy or a finite (independent
of $h$) number of eigenvalues.  Note also that  N. Raymond and S.~Vu
Ngoc have recently developed a different approach of the same
problem.
\end{abstract}
 \maketitle

%\tableofcontents

 \section{Introduction and main results}
Let $ M$ be a compact connected oriented manifold of dimension
$n\geq 2$ (possibly with boundary).  Let $g$ be a $C^1$
Riemannian metric and ${\bf B}\in C(M,\Lambda^2 T^*M)$ a continuous
real-valued closed 2-form on $M$. Assume that $\bf B$ is exact and
choose a real-valued 1-form ${\bf A}\in C^1(M,\Lambda^1 T^*M)$ on
$M$ such that $d{\bf A} = \bf B$. Thus, one has a natural mapping
\[
u\mapsto ih\,du+{\bf A}u
\]
from $C^1_c(M)$ to the space $C(M,\Lambda^1 T^*M)$ of continuous,
compactly supported one-forms on $M$ and  from $C^2_c(M)$ to the
space $C^1(M,\Lambda^1 T^*M)$. The Riemannian metric allows us to
define scalar products in these spaces and consider the adjoint
operator
\[
(ih\,d+{\bf A})^* : C^1(M,\Lambda^1 T^*M) \to C_c(M)\,.
\]
A Schr\"odinger operator with magnetic potential $\bf A$ is defined
on $C^2_c(M)$ by the formula
\[
H^h = (ih\,d+{\bf A})^* (ih\,d+{\bf A})\,.
\]
Here $h>0$ is a semiclassical parameter, which is assumed to be
small. If $M$ has a non-empty boundary, we will assume that the
operator $H^h$ satisfies the Dirichlet boundary conditions.

We are interested in the semiclassical asymptotics of the low-lying
eigenvalues of the operator $H^h$. This problem was studied in
\cite{Fournais-Helffer:book,luminy,miniwells,HKI,HM,HM01,HelSj7,Mat,MatUeki,Mont}
 (see also references therein).

We suppose that $M$ is two-dimensional. Then we can write ${\bf
B}=b\, d\xb_g$, where $b\in C(M)$ and $d\xb_g$ is the Riemannian
volume form. Let
\[
b_0=\min_{\xb \in M} |b(\xb )|\,.
\]
We assume that:
\begin{itemize}
\item $b_0>0\,$;
\item the set $\{\xb \in M : |b(\xb)| =b_0\}$ is a point  $\xb_0\,$, which is contained in the interior of $M\,$;
\item  $b$ is $C^\infty$ in a neighborhood of $\xb_0$ and there is a constant $C>0$ such that for all $\xb$
in some neighborhood of $\xb_0$ the estimates hold:
\[
C^{-1}d(\xb,\xb_0)^2\leq |b(\xb)|-b_0 \leq C d(\xb,\xb_0)^2\,.
\]
\end{itemize}
We introduce
\[
a={\rm Tr}\left(\frac12 {\rm Hess}\,b(\xb_0)\right)^{1/2}, \quad
d=\det \left(\frac12 {\rm Hess}\,b(\xb_0)\right)\,,
\]
and denote by $\lambda_0(H^h)\leq \lambda_1(H^h)\leq \lambda_2(H^h)\leq
\ldots$ the eigenvalues of the operator $H^h$ in $L^2(M)\,$.

The following theorem proved in \cite{HM01,HKI} gives upper and
lower estimates of $\lambda_j(H^h)$ as $h\to 0$. The contribution of
\cite{HKI} improves the result of \cite{HM01} which only gives a
two-terms asymptotics for the ground state energy in the flat case.

\begin{thm}\label{t:main}
Under current assumptions, for any  $j\in \mathbb N$, there exist $C_j>0$
and $h_j>0$ such that, for any $h\in (0,h_j]$,
\begin{multline*}
hb_0 +h^2\left[\frac{2d^{1/2}}{b_0}j+
\frac{a^2}{2b_0}\right]-C_jh^{19/8}\leq \lambda_j(H^h)\\ \leq hb_0
+h^2\left[\frac{2d^{1/2}}{b_0}j+ \frac{a^2}{2b_0}\right]+C_jh^{5/2}.
\end{multline*}
\end{thm}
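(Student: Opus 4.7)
The plan is to combine Agmon localization near $\xb_0$ with a Birkhoff normal form that decouples the fast cyclotron dynamics from an effective slow harmonic oscillator, and then to use the min-max principle for the two bounds.

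First, standard Agmon estimates --- available because $|b|-b_0$ vanishes non-degenerately at $\xb_0$ --- confine eigenfunctions associated with eigenvalues in $[hb_0, hb_0+Ch^2]$ to a neighborhood of $\xb_0$ of radius $O(h^{1/2-\delta})$. An IMS partition then allows one to replace $H^h$ by its expression in a fixed chart around $\xb_0$ up to exponentially small errors. In that chart I would choose isothermal coordinates for $g$ and a gauge with $\mathbf{A}(\xb_0)=0$ whose linear part produces the constant field $b_0$, and Taylor-expand $b$ and the metric coefficients at $\xb_0$.

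Second, after the semiclassical rescaling $\xb=\sqrt{h}\,X$, the operator $h^{-1}H^h$ equals, modulo terms organized in increasing powers of $\sqrt{h}$, the Landau Hamiltonian with field $b_0$ --- whose spectrum is $\{(2k+1)b_0\}_{k\ge 0}$ --- plus an $h$-perturbation carrying $\mathrm{Hess}\,b$ and the Hessian of the metric at $\xb_0$. A Birkhoff-type normal form obtained by successive unitary conjugations brings the operator into a form commuting with the Landau Hamiltonian modulo high-order remainders; projecting onto the lowest Landau level yields an effective two-dimensional harmonic oscillator in the slow variable. A direct computation, using that $a$ is the trace of the square root and $d$ the determinant of $\tfrac{1}{2}\mathrm{Hess}\,b(\xb_0)$, shows that its spectrum is $b_0+h\bigl(\tfrac{2d^{1/2}}{b_0}j+\tfrac{a^2}{2b_0}\bigr)+O(h^{3/2})$, which, multiplied back by $h$, reproduces the leading two-term asymptotic of the theorem.

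For the upper bound, the min-max principle applied to $j+1$ quasi-modes built from the lowest Landau ground state tensored with the first Hermite functions of the effective oscillator, suitably cut off at scale $h^{1/2}$, gives the $O(h^{5/2})$ error once one checks almost-orthogonality and the eigenvalue equation modulo $h^{5/2}$. For the lower bound one starts from the IMS partition, applies the normal-form conjugations inside the cutoff as a quadratic-form comparison, and bounds the remainders in terms of the slow-variable harmonic oscillator. The main obstacle is this quantitative lower bound: the $h^{19/8}$ error arises from optimizing three competing quantities --- the Agmon localization radius, the order at which the Taylor expansions of $b$ and of the metric are truncated, and the residuals of the Birkhoff normal form --- and one must carefully track commutators of cutoffs with normal-form conjugations to guarantee that the intrinsic constants $a^2/(2b_0)$ and $d^{1/2}$ are recovered without contamination from the choice of gauge or coordinates.
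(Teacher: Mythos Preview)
The paper does not actually prove Theorem~\ref{t:main}: it is quoted as a result established in the earlier references \cite{HM01} and \cite{HKI}, and the present paper's own contribution is the distinct Theorems~\ref{mainth} and~\ref{t:mainbis}, proved via Grushin's method. So there is no ``paper's own proof'' of this statement to compare against.

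That said, your sketch is a coherent route to the result, and it is worth noting that it is closer in spirit to the Raymond--Vu~Ngoc approach \cite{RV} (Birkhoff normal form, symplectic reduction to a slow effective Hamiltonian) than to the approach of the cited original sources. In \cite{HM01} the upper bound is obtained by an explicit quasimode construction in local coordinates --- essentially your second-to-last paragraph --- while the lower bound relies on a localization argument combined with a quadratic lower bound coming from the estimate $H^h\ge h|b|$ and a careful IMS decomposition; no full Birkhoff conjugation is performed. The refinement in \cite{HKI} pushes the quasimode construction to higher order to identify the $h^2$ coefficient. Your Birkhoff normal form packaging is more systematic and would in principle give the full asymptotic expansion (as in \cite{RV}), at the cost of heavier machinery; the original proofs are more hands-on but stop at the stated orders. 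One point to watch in your outline: the Agmon localization scale for eigenfunctions with eigenvalue $hb_0+O(h^2)$ is governed by the magnetic Agmon distance associated with $h(|b|-b_0)$, which gives confinement at scale $h^{1/4}$ rather than $h^{1/2-\delta}$; this is precisely what drives the competition of scales you allude to in the last paragraph, and getting the exponent $19/8$ requires tracking it correctly.
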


The main purpose of this paper is to reinterpret and extend
Theorem~\ref{t:main} in the following setting. We will consider the
magnetic Schr\"odinger operator $H^h$ in the flat Euclidean space
${\mathbb R}^2$:
\begin{equation} \label{e:Hh}
H^h=h^2D_x^2+(hD_y+A(x,y))^2.
\end{equation}
The magnetic field $\bf B$ is given by
\[
{\bf B}=b\,dx\wedge dy\,\mbox{ with }\quad b(x,y) =\frac{\partial
A}{\partial x}(x,y)\,.
\]
Let
\[
b_0=\min_{(x,y)\in {\mathbb R}^2} |b(x,y)|\,.
\]
We assume
\[
|b(x,y)| <  b_0+\eta_0:=\liminf_{|x|+|y|\to \infty} |b(x,y)|, \quad
\eta_0>0.
\]
One can prove (see Theorem~\ref{t:discrete-spectrum}) that, for any
$\eta_1<\eta_0$, there exists $h_1>0$ such that
\[
\sigma (H^h)\cap [0, h(b_0+\eta_1)) \subset \sigma_d (H^h)\,, \quad
\forall h\in (0,h_1]\,.
\]
As above, we assume that:
\begin{itemize}
\item $b_0>0$;
\item the set $\{(x,y)\in {\mathbb R}^2 : |b(x,y)| =b_0\}$ is a single point $(x_0,y_0)\,$;
\item  $b$ is $C^\infty$ in a neighborhood of $(x_0,y_0)$, and
$(x_0,y_0)$ is a non-degenerate minimum:
\[
\operatorname{Hess} b (x_0,y_0)>0\,.
\]
\end{itemize}

We take linear coordinates in ${\mathbb R}^2$ such that
$(x_0,y_0)=(0,0)\,$. We can also assume after possibly a gauge
transformation that:
\begin{equation}\label{A00}
A(0,0)=0 \mbox{ and } \frac{\partial A}{\partial y} (0,0)=0\,.
\end{equation}
We have a diffeomorphism $\phi : {\mathbb R}^2\to {\mathbb R}^2$
defined by
\[
\phi(x,y)=(A(x,y),y), \quad (x,y)\in {\mathbb R}^2\,.
\]
We then associate with $b$  a function $\hat b\in C^\infty({\mathbb
R}^2)$ by
\[
\hat b = b\circ \phi^{-1}\,.
\]

Our goal is to prove the following theorem.

\begin{thm}\label{mainth}
 There exist $h_0>0, \epsilon_0>0, \gamma_0 \in (0,\eta_0)$,  $h\mapsto \gamma_0(h)$ defined for
   $(0,h_0]$ such that $\gamma_0(h)\to \gamma_0$ as $h\to 0$,
and a semiclassical symbol $p_{\rm eff}(y, \eta ,h, z)$, which is
defined in a neighborhood $\Omega\subset {\mathbb R}^2$ of the set
$\{(y,\eta)\in \mathbb R^2: \hat b(y,\eta) \leq b_0+\gamma_0\}$ for
$h\in (0,h_0]$ and  $z\in \mathbb C$ such that
$|z|<\gamma_0+\epsilon_0$, of the form
\begin{equation}\label{bth}
p_{\rm eff} (y, \eta ,h, z) \sim\sum_{j\in \mathbb N} p_{\rm
eff}^{j}(y,\eta,z) h^j\,,
\end{equation}
with
\begin{equation}\label{cth}
 p_{\rm eff}^0 (y,\eta,z) =  \hat b(y,\eta) - b_0 -z\,,
\end{equation}
such that $\lambda_h \in \sigma (H^h)\cap [0, h (b_0
+\gamma_0(h)))$,  if and only if the associated
$h$-pseudo\-diffe\-ren\-tial operator $p_{\rm eff}(y, hD_y,h, z(h))$
has an approximate $0$-ei\-gen\-fun\-cti\-on $u_h^{qm}\in
C^\infty(\mathbb R)$, i.e.
\begin{equation}\label{ath}
p_{\rm eff}(y, hD_y,h, z(h)) u_h^{qm} = \mathcal O (h^\infty)\,,
\end{equation}
with
\[
z(h)= \frac 1 h (\lambda_h - h b_0)+\mathcal O (h^\infty)\,,
\]
$|z(h)|<\gamma_0(h)$ for any $h\in (0,h_0]$, and such that the
frequency set of $u_h^{qm}$ is non-empty and contained in
$\Omega\,$.
\end{thm}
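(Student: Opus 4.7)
The plan is to reduce the two-dimensional spectral problem in the window $[0,h(b_0+\gamma_0(h)))$ to a one-dimensional effective semiclassical pseudodifferential problem via a Grushin construction keyed to the ground state of the transversal harmonic oscillator that emerges from localization near $(0,0)$. First I would use the diffeomorphism $\phi(x,y)=(A(x,y),y)$, supplemented by a unitary multiplication absorbing the Jacobian, to conjugate $H^h$ into an $h$-pseudodifferential operator $\widetilde H^h$ on $L^2(\mathbb R^2)$ whose rescaled version $\widehat H^h=h^{-1}\widetilde H^h$ has the following property: freezing $(y,\eta)$ in the ``base'' phase space and viewing the Weyl symbol as a function of the transversal variables $(X,\Xi)$, one gets a symbol with a unique non-degenerate minimum at which the Hessian quantizes to a harmonic oscillator of frequency $\hat b(y,\eta)$, so that its ground state eigenvalue is $\hat b(y,\eta)\geq b_0$ and the next transversal eigenvalue is $\geq 3\hat b(y,\eta)$, giving a spectral gap of order $1$ above the ground state uniformly on $\Omega$.

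Next I would set up a Grushin problem
\[
\mathcal P(z)=\begin{pmatrix}\widehat H^h-(b_0+z) & R_-\\ R_+ & 0\end{pmatrix}:\;L^2(\mathbb R^2)\oplus L^2(\mathbb R)\longrightarrow L^2(\mathbb R^2)\oplus L^2(\mathbb R),
\]
where $R_+$ is an operator with operator-valued semiclassical symbol in the base variables $(y,\eta)$ whose leading symbol is the orthogonal projection onto the transversal ground state, and $R_-$ is its transpose. The transversal gap, combined with the confining assumption $|b|<b_0+\eta_0$ at infinity (which yields discreteness of the spectrum in the window via Theorem~\ref{t:discrete-spectrum}), ensures that $\mathcal P(z)$ is bijective uniformly for $|z|<\gamma_0+\epsilon_0$ and $h\in(0,h_0]$ for appropriately chosen $\gamma_0,\epsilon_0,h_0$. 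Its lower-right block $E_{-+}(z)$ is then an $h$-pseudodifferential operator on $\mathbb R$, and the standard Schur identity gives the spectral equivalence: $\lambda_h=h(b_0+z(h))$ lies in $\sigma(H^h)$ within the window if and only if $E_{-+}(z(h))$, identified with $p_{\rm eff}(y,hD_y,h,z(h))$, has $0$ as an approximate eigenvalue in the sense of \eqref{ath}; the microlocal nature of the construction forces the frequency set of $u_h^{qm}$ to lie in $\Omega$.

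Finally, the symbol of $E_{-+}(z)$ admits the classical asymptotic expansion \eqref{bth}, obtained by inverting $\mathcal P(z)$ iteratively in the operator-valued semiclassical composition calculus: at each order the residual equation reduces to solving the transversal oscillator on the orthogonal complement of its ground state, where the resolvent is uniformly bounded thanks to the transversal gap. The leading-order computation yields $p_{\rm eff}^0(y,\eta,z)=\hat b(y,\eta)-b_0-z$ as in \eqref{cth}. The main obstacle is the rigorous setup of this operator-valued symbolic calculus---in particular, the smooth $(y,\eta)$-parametrization of the transversal ground state over $\Omega$, the construction of $R_\pm$ as genuine semiclassical operators compatible with the composition formula, and the uniform symbol estimates on the remainders---so that the expansion is truly classical in $h$ and the Grushin inversion is valid uniformly for $z$ in the disc of radius $\gamma_0+\epsilon_0$; the $h$-dependent threshold $\gamma_0(h)\to\gamma_0$ reflects the maximal window on which this uniform machinery remains controlled as $h\to 0$.
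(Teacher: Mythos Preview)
Your outline is broadly correct---the proof does go through a Grushin reduction keyed to the transversal harmonic oscillator---but the route you sketch differs from the paper's in two essential technical choices, and you should be aware of what each choice costs.

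\emph{First difference: the normal form.} The paper does not stay within the standard operator-valued $h$-pseudodifferential calculus. After the change of variables $(x,y)\mapsto(A(x,y),y)$, a Jacobian-absorbing conjugation, a partial $h$-Fourier transform, and a linear change, it performs the \emph{non-metaplectic} dilation $x\mapsto h^{1/2}x$. This destroys the usual semiclassical calculus in the transversal direction; the resulting operator $\widetilde T^h$ is expanded in powers of $h^{1/2}$ (not $h$), with remainders controlled only as maps between the anisotropic spaces $B^{s+N+1}(\mathbb R_x)\otimes L^2(\mathbb R_y)\to B^s(\mathbb R_x)\otimes L^2(\mathbb R_y)$. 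To make these remainders act on eigenfunctions, the paper proves maximal estimates $\|x^\ell D_x^m\widetilde u_h\|=\mathcal O(1)$ via H\"ormander--Helffer--Nourrigat regularity for iterates of the subelliptic system $(\partial_x,\partial_y+A\partial_t)$. That the final expansion \eqref{bth} is in \emph{integer} powers of $h$ is then a separate parity argument ($S_1$ reverses parity in $x$, so $\epsilon_{1\pm}=0$). Your proposal skips all of this by invoking an ``operator-valued semiclassical composition calculus''; such a calculus exists, but making it work here is essentially the content of the proof, not a black box.

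\emph{Second difference: the Grushin data.} You take $R_\pm$ with $(y,\eta)$-dependent symbol equal to the projection onto the ground state of the transversal oscillator at $(y,\eta)$. The paper instead takes $R_\pm$ built from the \emph{fixed} ground state $h_0$ of $b_0^2D_x^2+x^2$, i.e.\ the oscillator frozen at the minimum $(0,0)$. This makes $R_\pm$ trivial (no symbolic calculus needed for them), but the inversion of the principal Grushin symbol is then only perturbative around $(y,\eta)=(0,0)$, which is why $\gamma_0$ must be small. Your $(y,\eta)$-dependent choice is exactly the alternative Grushin problem the paper discusses in Subsections~\ref{ss5} and Section~\ref{s7}; there it is shown that the two effective symbols differ by an elliptic factor, and that your choice gives the cleaner formula $\epsilon^1_\pm=\hat b-b_0-z$ directly and would in principle cover the larger window $[hb_0,h\min(b_0+\eta_0,3b_0))$. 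The paper explicitly says it is ``obliged to go back'' to the fixed Grushin ``for technical reasons'': with the $(y,\eta)$-dependent $R_\pm$ one must quantize them as genuine operators (not just symbols) compatible with the anisotropic $h^{1/2}$-calculus, and the paper does not carry this out.

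\emph{A point to watch.} You assert that $\mathcal P(z)$ is \emph{bijective} uniformly. In the paper the Grushin problem is inverted only modulo $\mathcal O_\Omega(h^{N/2})$ after inserting a cutoff $\chi$ supported in $\Omega_1\supset\Omega$; there is no global exact inverse, and the spectral equivalence is obtained by showing separately (Section~\ref{s:Grushin-quasimodes}) that an eigenfunction of $H^h$ produces, via $R_+$, an approximate null vector of $\epsilon_\pm(z)$ with controlled norm and frequency set, and conversely. Your Schur-complement sentence hides this two-sided quasimode argument, which in the paper requires the frequency-set localization of eigenfunctions (Section~\ref{s:frequency}) and the norm lower bound $\|R_+\widetilde u_h\|\geq C>0$.
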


\begin{rem}
Here \eqref{ath} makes sense modulo $\mathcal O (h^\infty)$ by
extending first the symbol $p_{\rm eff}(y, \eta ,h, z)$ outside the
neighborhood $\Omega$ to a semiclassical symbol in $\mathbb R^2$ and
defining then the operators $p_{\rm eff}(y, hD_y,h, z)$ by the Weyl
calculus. Using the localization of the frequency set of $u_h^{qm}$,
the left hand side of \eqref{ath} does not depend on the extension
up to an error which is $\mathcal O (h^\infty)$.
\end{rem}

\begin{rem}
For any $E\in [b_0,b_0+\gamma_0)$, the spectrum of the operator
$H^h$ (divided by $h$) is determined near  $E$ (say in an interval
$(E-C h^\frac 12, E+ C h^\frac 12)$) and modulo $\mathcal O(h^\frac
32)$ by the spectrum of $\hat b(y,hD_y) + h b_1(y, hD_y,E)$, where
one can use the Bohr-Sommerfeld rule (see \cite{HelRo} or
\cite{HeSjharp1}  for a mathematical justification) for determining
the energy levels. \end{rem}

\begin{rem} Of course $\gamma_0$ is such that $b(x,y)$ is
$C^\infty$ in a neighborhood of $b^{-1}((0, b_0+\gamma_0])$.
\end{rem}

Denote by $\lambda_0(H^h)\leq \lambda_1(H^h)\leq \lambda_2(H^h)\leq
\ldots$ the eigenvalues of the operator $H^h$ in $[0, h(b_0
+\gamma_0))$.

\begin{thm}\label{t:mainbis}
Under current assumptions, for any $j\in \mathbb N$, there exists a
sequence $(\alpha_{j,\ell})_{\ell\in \mathbb N}$ such that
\[
\lambda_j(H^h) \sim h \sum_{\ell=0}^\infty \alpha_{j,\ell}
h^{\ell}\,.
\]
In other words, for any $N$, there exist $C_{j,N}>0$ and $h_{j,N}>0$
such that, for any $h\in (0,h_{j,N}]$,
\[
|\lambda_j(H^h) - h \sum_{\ell=0}^N \alpha_{j,\ell} h^{\ell }|\leq
C_{j,N} h^{N+2}\,.
\]
\end{thm}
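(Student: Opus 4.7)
The strategy is to use Theorem~\ref{mainth} to reduce the original two-dimensional problem to a one-dimensional semiclassical spectral problem for the effective operator $P(h,z):=p_{\rm eff}(y,hD_y,h,z)$, whose principal symbol $\hat b(y,\eta)-b_0-z$ has a non-degenerate minimum at $(0,0)$ with critical value $-z$, and then to carry out a semiclassical Birkhoff normal form analysis of $P(h,z)$ near $(0,0)$.

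\emph{Step 1: Birkhoff normal form.} Near $(0,0)$, one has
\[
\hat b(y,\eta)-b_0 \;=\; \tfrac12\bigl(\alpha\, y^2+\beta\,\eta^2\bigr)+\cO\bigl(|(y,\eta)|^3\bigr),
\]
with $\alpha,\beta>0$ determined by ${\rm Hess}\,b(0,0)$ via the diffeomorphism $\phi$. After a linear symplectic change of coordinates bringing the quadratic part to the harmonic oscillator $\tfrac12 \omega(y^2+\eta^2)$ (with $\omega=2d^{1/2}/b_0$), I would apply the classical and then the semiclassical Birkhoff normal form in the parameter-dependent setting (following Sj\"ostrand and Helffer--Sj\"ostrand) to produce a microlocally unitary $h$-Fourier integral operator $U_h$, a classical symbol $f(t,h,z)\sim\sum_{k\geq 0}f_k(t,z)\,h^k$ with $f_0(0,z)=-z$ and $\partial_t f_0(0,z)=\omega$, and an error $R_h(z)$ microlocally $\cO(h^\infty)$ near $(0,0)$, such that
\[
U_h^{-1}\,P(h,z)\,U_h \;=\; f\!\left(\tfrac12(y^2+(hD_y)^2),\,h,\,z\right)\;+\;R_h(z).
\]

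\emph{Step 2: Quasi-modes and formal inversion.} Let $\psi_j$ be the $j$-th rescaled Hermite function, an eigenfunction of $\tfrac12(y^2+(hD_y)^2)$ with eigenvalue $h(j+\tfrac12)$. The candidate quasi-mode $u_h^{qm}:=U_h\psi_j$ has frequency set $\{(0,0)\}\subset\Omega$, and \eqref{ath} becomes the scalar equation
\[
f\bigl(h(j+\tfrac12),\,h,\,z\bigr)=\cO(h^\infty).
\]
Since $\partial_z f_0(0,z)=-1\neq 0$, the implicit function theorem applied order by order to this formal series in $h$ produces a unique formal solution
\[
z_j(h)\;\sim\;\sum_{\ell\geq 0}\alpha_{j,\ell+1}\,h^\ell,\qquad \alpha_{j,1}=\omega\bigl(j+\tfrac12\bigr)+\frac{a^2}{2b_0}\,,
\]
where the constant term comes from $f_1(0,z)$ (giving back the second-order coefficient of Theorem~\ref{t:main}). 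Setting $\alpha_{j,0}=b_0$, the ``if'' direction of Theorem~\ref{mainth} then places $h(b_0+z_j(h))$ in $\sigma(H^h)$ modulo $\cO(h^\infty)$.

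\emph{Step 3: Identification with $\lambda_j(H^h)$.} The ``only if'' direction of Theorem~\ref{mainth} shows that \emph{every} eigenvalue of $H^h$ in the window $[0,h(b_0+\gamma_0(h)))$ is of the form $h(b_0+z(h))$ for some $z(h)$ solving $p_{\rm eff}(y,hD_y,h,z(h))u_h^{qm}=\cO(h^\infty)$. By Step~1 and the fact that the model operator $\tfrac12(y^2+(hD_y)^2)$ has simple spectrum $\{h(j+\tfrac12)\}_{j\in\NN}$, the admissible parameters $z(h)$ form exactly the sequence $\{z_j(h)\}_{j\in\NN}$ constructed in Step~2. Combining this with the ordering $\alpha_{0,1}<\alpha_{1,1}<\cdots$ and a standard min-max / spectral-gap argument for small $h$ identifies $\lambda_j(H^h)=h(b_0+z_j(h))+\cO(h^\infty)$, which is the asserted expansion.

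\emph{Main obstacle.} The technical heart is the $z$-dependent semiclassical Birkhoff normal form of Step~1, carried out uniformly for $z$ in a complex neighborhood of $0$ and on a large enough microlocal neighborhood to accommodate all quasi-modes with frequency set in $\Omega$. One must check that the normal form procedure commutes smoothly with the parameter $z$, that the resulting formal series for $z_j(h)$ can be summed to a true asymptotic expansion, and that the bijection between eigenvalues of $H^h$ and the parameters $z_j(h)$ is order-preserving and covers \emph{all} eigenvalues below $h(b_0+\gamma_0(h))$ --- a point which must exploit the discreteness statement preceding Theorem~\ref{mainth} together with the non-degeneracy of the minimum. Once these points are settled, the remaining implicit-function and quasi-mode arguments are routine.
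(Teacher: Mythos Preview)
Your strategy coincides with the paper's: Theorem~\ref{t:mainbis} is not proved separately there but is presented as a corollary of Theorem~\ref{mainth}, whose decisive output \eqref{bth} is that $p_{\rm eff}$ expands in \emph{integer} powers of $h$ (this is where the parity argument killing $\epsilon_{1\pm}$ and more generally the odd $h^{j/2}$ terms in Subsection~\ref{ss6} enters); from there the paper simply points to standard one-dimensional references \cite{HelRo,HeSjharp1}, and your parameter-dependent Birkhoff normal form is one correct way to carry out that last step.

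A couple of minor slips to fix. Since $f_0(0,z)=-z$, the equation $f(h(j+\tfrac12),h,z)=0$ forces $z_j(h)=O(h)$, so the series should read $z_j(h)\sim\sum_{\ell\ge1}\alpha_{j,\ell}h^\ell$ rather than $\sum_{\ell\ge0}\alpha_{j,\ell+1}h^\ell$. Relatedly, your displayed value of $\alpha_{j,1}$ disagrees with Theorem~\ref{t:main} by $\omega/2$: using $a^2=\Tr(\tfrac12{\rm Hess}\,b)+2d^{1/2}$ one has $\frac{2d^{1/2}}{b_0}j+\frac{a^2}{2b_0}=\omega(j+\tfrac12)+\frac{1}{2b_0}\Tr\bigl(\tfrac12{\rm Hess}\,b(0,0)\bigr)$, so the subprincipal contribution $f_1(0,0)$ is $\frac{a^2}{2b_0}-\frac{d^{1/2}}{b_0}$, not $\frac{a^2}{2b_0}$. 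Finally, Theorem~\ref{mainth} is a set-level ``if and only if'', so in Step~3 the labeled identification $\lambda_j(H^h)=h(b_0+z_j(h))+O(h^\infty)$ with correct multiplicities is not automatic; you should either invoke Theorem~\ref{t:main} (which already separates the $\lambda_j$ at order $h^2$) or supply an almost-orthogonality argument for the quasi-modes $\epsilon^N_-(z_j)\,U_h\psi_j$.
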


By the results of \cite{HM01} (see Theorem~\ref{t:main}), it
follows that
\[
\alpha_{j,0} = b_0, \quad \alpha_{j,1}= \frac{2d^{1/2}}{b_0}j+
\frac{a^2}{2b_0}\,.
\]
In \cite{HKI}, it was shown that, in the case of magnetic
Schr\"odinger operator on a two-dimensional Riemannian manifold,
each $\lambda_j$ admits an asymptotic expansion in the form
\[
\lambda_j(H^h) \sim h \sum_{\ell=0}^\infty \alpha_{j,\ell/2}
h^{\ell/2}\,.
\]
Theorem~\ref{t:mainbis} improves this result in the flat case,
showing that no odd powers of $h^{1/2}$ actually occur. It is also
proved in \cite{RV}.

\begin{cor} There exists $\gamma_0 \in (0, \eta_0)$, $h_0>0$ and $C>0$ such that
$$
\lambda_{j+1}(H^h) -\lambda_j(H^h) \geq \frac 1C h^2 \,,\, \forall
h\in (0,h_0]\,,
$$
for any $j$ such that $\lambda_{j+1}(H^h) < h (b_0 + \gamma_0)$.
\end{cor}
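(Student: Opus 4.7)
The plan is to deduce the gap estimate from the reduction provided by Theorem~\ref{mainth}. I would fix $\gamma_0$ no larger than the constant produced by that theorem. For any $j$ with $\lambda_{j+1}(H^h) < h(b_0+\gamma_0)$, both $\lambda_j(H^h)$ and $\lambda_{j+1}(H^h)$ lie in the range of applicability, so they correspond to values $z_j(h), z_{j+1}(h)$ with $\lambda_k(H^h) = hb_0 + h z_k(h) + \mathcal{O}(h^\infty)$ and such that $p_{\rm eff}(y,hD_y,h,z_k(h))$ admits a nontrivial approximate kernel. The desired bound is then equivalent to a uniform-in-$j$ gap estimate $z_{j+1}(h) - z_j(h) \geq h/C'$ for the admissible values in $[0,\gamma_0)$.

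Next, I would convert the $z$-parametrized kernel condition into a genuine spectral problem. Since $p_{\rm eff}^{0}(y,\eta,z) = \hat b(y,\eta) - b_0 - z$ is affine in $z$ with derivative $-1$, a semiclassical implicit function argument at the symbol level yields a self-adjoint $h$-pseudodifferential operator $P(h)$ with principal symbol $\hat b - b_0$ whose eigenvalues in $[0,\gamma_0)$ agree with the $z_k(h)$ modulo $\mathcal{O}(h^\infty)$. The symbol $\hat b - b_0$ is nonnegative with a non-degenerate minimum at the origin: a short computation using the Jacobian of $\phi$ at $(0,0)$ together with \eqref{A00} gives $\det\operatorname{Hess}\hat b(0,0) = 4d/b_0^{2} > 0$.

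The one-dimensional semiclassical harmonic approximation (equivalently, Birkhoff normal form plus Bohr-Sommerfeld, as in the remark above) then yields, uniformly for all $k$ with $z_k(h) < \gamma_0$, an expansion
\[
z_k(h) = \omega h\bigl(k+\tfrac{1}{2}\bigr) + \mathcal{O}(h^2), \qquad \omega := \sqrt{\det\operatorname{Hess}\hat b(0,0)} = \tfrac{2\sqrt{d}}{b_0} > 0.
\]
Hence $z_{k+1}(h) - z_k(h) \geq \omega h/2$ for $h$ small, and multiplying by $h$ produces the desired lower bound $\lambda_{j+1}(H^h) - \lambda_j(H^h) \geq \omega h^{2}/2$.

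The main obstacle is the uniformity over all admissible $j$, whose number grows like $h^{-1}$; Theorem~\ref{t:mainbis} by itself treats each $j$ separately and is insufficient. This uniformity is supplied by the global validity of the harmonic approximation for $P(h)$ on the compact sublevel set $\{\hat b \leq b_0 + \gamma_0\}$, which is the microlocal content already built into Theorem~\ref{mainth} and the associated Bohr-Sommerfeld quantization.
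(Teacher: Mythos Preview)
Your overall strategy---reduce via Theorem~\ref{mainth} to a one-dimensional effective operator with principal symbol $\hat b-b_0$ and then invoke one-dimensional semiclassical spectral theory---is exactly what the paper has in mind (cf.\ the Bohr--Sommerfeld remark preceding the Corollary; the paper gives no separate proof).

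There is, however, a genuine error in your key step. The displayed formula
\[
z_k(h)=\omega h\bigl(k+\tfrac12\bigr)+\mathcal O(h^2)\quad\text{uniformly for all }k\text{ with }z_k(h)<\gamma_0
\]
is \emph{not} true. This is the harmonic-oscillator approximation, valid only for bounded $k$ (more generally for $kh\to 0$); here $k$ ranges up to order $h^{-1}$, so the $z_k(h)$ fill the whole interval $[0,\gamma_0)$ and are governed by the full symbol $\hat b-b_0$, not merely its Hessian. What Bohr--Sommerfeld (or the Birkhoff normal form) actually yields uniformly is
\[
\Phi\bigl(z_k(h)\bigr)=\bigl(k+\tfrac12\bigr)h+\mathcal O(h^2),
\]
where $\Phi(E)=(2\pi)^{-1}\operatorname{Area}\{\hat b\le b_0+E\}$ is smooth on $[0,\gamma_0]$ with $\Phi(0)=0$ and $\Phi'(0)=1/\omega$. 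Hence $z_k(h)=\Phi^{-1}\bigl((k+\tfrac12)h\bigr)+\mathcal O(h^2)$, which is nonlinear in $k$; writing $z_k=\omega h(k+\tfrac12)+\mathcal O(h^2)$ misses an error of size $O((kh)^2)$, which is $O(1)$ at the top of the window. The gap estimate nevertheless follows, because $\Phi'$ is bounded above on $[0,\gamma_0]$ (equivalently $(\Phi^{-1})'$ is bounded below), giving $z_{k+1}(h)-z_k(h)\ge c\,h$ uniformly. So your conclusion survives, but the intermediate formula must be replaced by the correct Bohr--Sommerfeld statement; ``harmonic approximation'' and ``Bohr--Sommerfeld'' are not interchangeable in this regime, contrary to your parenthetical.
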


The proof of Theorem~\ref{mainth} is based on Grushin's method. As
the name ``Grushin's method'' indicates, the technique comes back to
Grushin \cite{Gru}. It was popularized by J. Sj\"ostrand starting
from 1974 \cite{Sj}. The method turned out to be very effective not
only in hypoellipticity theory \cite{Sj}, \cite{He77}, but also in
spectral theory \cite{HelSjSonder}, \cite{HeSjharp3}. The reader can
find a nice presentation of this method in \cite{SjZw}.

For the proof, we first make some changes of variables and
asymptotic expansions to put the operator $H^h$ in a normal form
near $(0,0)$. Then we construct an appropriate Grushin problem in a
neighborhood of $(0,0)$ and apply Grushin's method. This approach is
local near the minimum point $(0,0)$.

From a close but different point of view, the problem under
consideration was studied by N. Raymond and S. Vu Ngoc \cite{RV}.
Their proof is reminiscent of Ivrii's approach (see \cite{Iv0} or in his
book \cite{Iv} in different versions Chapter 18) and uses a Birkhoff normal
form. This approach has the advantage to be semi-global and uses more general
symplectomorphisms and their quantizations.

Theorem 1.5 in \cite{RV} is stronger than our Theorem~\ref{mainth}
because Theorem~\ref{mainth} gives a description of the spectrum of
$H^h$ in the interval $[hb_0,h(b_0+\gamma_0))$ for some $\gamma_0\in
(0,\eta_0)$, whereas, in \cite[Theorem 1.5]{RV}, $\gamma_0\in
(0,\eta_0)$ is arbitrary. On the other hand, the symbol of the
effective Hamiltonian in \cite[Theorem 1.5]{RV} seems to be less
explicit than in Theorem~\ref{mainth} (see in Section \ref{s7}). The
other point could be that our approach allows us to treat an
additional term $h^2 V(x,y)$. This will complete the analysis of
Helffer-Sj\"ostrand \cite{HelSjSonder} in the case of the constant
magnetic field (strong magnetic case). This  kind of approach
appears also in \cite{Kar} (see Remark~3.1 and more specifically
(3.5)). The case with an additional term of the form $h V$ could
also be interesting. Note that, in the book \cite{Iv} (see also the
announcement in \cite{Iv0}), there is also an interesting normal
form corresponding to the case of dimension 3. We hope to come back
to this point in a near future.

The paper is organized as follows. In Section~\ref{s:prelim}, we
establish some general properties of the magnetic Schr\"odinger
operator in the flat Euclidean space ${\mathbb R}^2$. First, we
recall the proof  that the spectrum of $H^h$ on the interval $[hb_0,
h(b_0+\eta_0))$ is discrete. Then we show that, if $A$ is changed at
infinity in such a way that a neighborhood of $b^{-1}([b_0,
b_0+\eta_0])$ is unchanged, then this change will only affect the
spectrum of $H^h$ on the interval $[hb_0, h(b_0+\gamma_0)]$ with
$\gamma_0<\eta_0$ by exponentially small corrections. This fact
allows us to impose rather strong assumptions on $A$ in our further
considerations. More precisely, we will assume that, for any
$(k,\ell)\in \mathbb N^2$ with $k+\ell>0\,$, the derivative
$\partial^{k}_x\partial^\ell_yA$ is uniformly bounded in $\mathbb
R^2$. In Section~\ref{s:normalforms}, using some changes of
variables and asymptotic expansions, we put the operator into a
normal form. Section~\ref{s:eigenfunctions} is devoted to some
asymptotic properties of eigenfunctions associated with the spectrum
of $H^h$ on the interval $[hb_0, h(b_0+\eta_0)]$. First, we obtain
an information on the frequency set of these eigenfunctions. Then we
derive estimates for such eigenfunctions in the Sobolev spaces
$B^k(\mathbb R^2)$ defined in \eqref{e:defBk}.
Section~\ref{s:Grushin} is devoted to construction and investigation
of an appropriate Grushin problem. In
Section~\ref{s:Grushin-quasimodes}, we complete the proof of
Theorem~\ref{mainth}. Finally we discuss in Section \ref{s7}
possible extensions.

\section{Preliminaries on the magnetic Schr\"odinger
operator}\label{s:prelim} In this section, we will discuss some
general properties of the magnetic Schr\"odinger operator $H^h_A$ in
the flat Euclidean space ${\mathbb R}^2$ given by
\[
H^h_A=h^2D_x^2+(hD_y+A(x,y))^2\,,
\]
where $A\in C^1(\mathbb R^2)\,$.\footnote{The optimal condition of
regularity of $A$ is weaker but our assumption permits to avoid
writing ``almost everywhere'' in the condition.}

Under these assumptions, the operator $H^h_A$ is essentially
self-adjoint in $L^2(\mathbb R^2)$ with initial domain
$C^\infty_c(\mathbb R^2)$ (see, for instance, \cite[Theorem
1.2.2]{Fournais-Helffer:book}).

Let
\[
b_0=\min_{(x,y)\in {\mathbb R}^2} |b(x,y)|\,.
\]
We assume
\begin{equation}\label{e:def-eta0}
|b(x,y)| <  b_0+\eta_0:=\liminf_{|x|+|y|\to \infty} |b(x,y)|\,, \quad
\eta_0>0\,.
\end{equation}
For any self-adjoint operator $P$ in a Hilbert space $\mathcal H$,
we denote by $\sigma(P)$ the spectrum of $P$, by $\sigma_d(P)$ the
discrete spectrum of $P$ and by $\sigma_{\rm ess}(P)$ the essential
spectrum of $P\,$.

\begin{thm}\label{t:discrete-spectrum}
For any $\eta_1<\eta_0$, we have
\[
\sigma (H^h_A)\cap [0, h(b_0+\eta_1)) \subset \sigma_d (H^h_A)\,,
\quad \forall h>0\,.
\]
\end{thm}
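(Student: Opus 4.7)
The plan is to reduce the theorem to the standard variational (Persson's) characterization of the bottom of the essential spectrum, after establishing the classical pointwise magnetic lower bound $H^h_A \geq h|b|$.

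\textbf{Step 1: Magnetic lower bound.} First, I would set $P_1=hD_x$ and $P_2=hD_y+A$, so that $H^h_A=P_1^2+P_2^2$ and a short computation gives the commutator
\[
i[P_1,P_2] = i[hD_x,A] = h\,\partial_x A = hb\,.
\]
Since $P_1,P_2$ are symmetric on $C^\infty_c(\mathbb R^2)$, one has
\[
\|(P_1\pm iP_2)u\|^2 = \langle H^h_A u, u\rangle \mp \langle hb\,u,u\rangle \geq 0\,,
\]
for all $u\in C^\infty_c(\mathbb R^2)$. Taking the maximum of the two inequalities yields the bound
\begin{equation*}
\langle H^h_A u, u\rangle \geq h \int_{\mathbb R^2} |b(x,y)|\,|u(x,y)|^2\,dx\,dy\,, \qquad u \in C^\infty_c(\mathbb R^2)\,,
\end{equation*}
which by density extends to the form domain of $H^h_A$.

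\textbf{Step 2: Persson's formula.} Since $H^h_A$ is essentially self-adjoint on $C^\infty_c(\mathbb R^2)$ and bounded from below, Persson's characterization gives
\[
\inf \sigma_{\rm ess}(H^h_A) = \sup_{K \Subset \mathbb R^2}\, \inf\Bigl\{\langle H^h_A u,u\rangle : u\in C^\infty_c(\mathbb R^2\setminus K),\ \|u\|=1\Bigr\}\,.
\]

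\textbf{Step 3: Localization at infinity.} Fix $\eta_1<\eta_0$. By the assumption \eqref{e:def-eta0}, there exists a compact set $K_{\eta_1}\subset \mathbb R^2$ such that
\[
|b(x,y)| \geq b_0 + \eta_1 \quad \text{for all } (x,y)\in \mathbb R^2\setminus K_{\eta_1}\,.
\]
For every $u\in C^\infty_c(\mathbb R^2\setminus K_{\eta_1})$ with $\|u\|=1$, the inequality of Step 1 yields
\[
\langle H^h_A u, u\rangle \geq h\int |b|\,|u|^2 \geq h(b_0+\eta_1)\,.
\]
Together with Persson's formula, this proves
\[
\inf \sigma_{\rm ess}(H^h_A) \geq h(b_0+\eta_1)\,,
\]
so $\sigma(H^h_A)\cap [0,h(b_0+\eta_1))\subset \sigma_d(H^h_A)$, as required.

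\textbf{Expected obstacle.} There is no substantial obstacle: the only subtle ingredients are (i) the factorization identity of Step 1, which requires a brief justification via the commutator computation, and (ii) the legitimacy of Persson's formula, which is standard for semi-bounded self-adjoint Schr\"odinger-type operators on $\mathbb R^n$. The assumption that $\partial_x A = b$ is continuous is all that is needed to carry out the commutator calculation on $C^\infty_c(\mathbb R^2)$, and the rest is a direct consequence of the hypothesis on $\liminf_{|x|+|y|\to\infty}|b|$.
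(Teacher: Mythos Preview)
Your proof is correct and follows essentially the same approach as the paper: the magnetic lower bound $\langle H^h_A u,u\rangle \geq h\int |b|\,|u|^2$ combined with Persson's characterization of $\inf\sigma_{\rm ess}$. In fact you supply more detail than the paper, which simply recalls the lower bound without deriving it via the commutator identity.
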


\begin{proof}
We recall the estimate
\begin{equation}\label{e:lower-bound}
(H^h_Au,u)\geq h\int_{\mathbb R^2} |b(x,y)| |u(x,y)|^2\,dx\,dy\,,
\quad \forall u \in H^1_0(\mathbb R^2)\,.
\end{equation}
The theorem follows immediately from this estimate and Persson's
characterization of the bottom of the essential spectrum of a
self-adjoint uniformly elliptic operator $P$ on $L^2(\mathbb R^n)$
(see \cite{Agmon}),
\[
{\rm Inf}\,\sigma_{\rm ess}(P)=\lim_{R\to\infty}
\inf_{\substack{u\in D(P), \|u\|\neq 0, \\ {\rm supp}\,u\subset
\{|x|\geq R\}}} \frac{(Pu,u)}{\|u\|^2}\,.
\]
\end{proof}

\begin{thm}\label{t:comparison}
Let $\eta \in (0,\eta_0)$ with $\eta_0$ defined in
\eqref{e:def-eta0}. Assume that $\tilde A\in C^1(\mathbb R^2)$ is
such that $\tilde A(x,y)=A(x,y)$ for any $(x,y)\in \mathbb R^2$ such
that $b(x,y)<b_0+\eta$. For any $\eta_1$ such that $0<\eta_1<\eta$,
there exist $h_0>0$, $\alpha>0$, a map $(0,h_0]\ni h\mapsto
\epsilon(h)>0$, such that $\epsilon(h)\to 0$ as $h\to 0$ and, for
any $h\in (0,h_0]$, a one to one map
\[
\Psi^h : \sigma (H^h_{\tilde A})\cap [hb_0,
h(b_0+\eta_1+\epsilon(h))]\to \sigma (H^h_A)\cap [hb_0,
h(b_0+\eta_1+\epsilon(h))]\,,
\]
such that
\[
|\Psi^h(\lambda)-\lambda|<e^{-\frac{\alpha}{h^{1/2}}}, \quad \forall
h\in (0,h_0]\,.
\]
\end{thm}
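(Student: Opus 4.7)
The plan is to prove Agmon-type exponential decay (in $h^{-1/2}$) for eigenfunctions of $H^h_A$ and $H^h_{\tilde A}$, then to transfer eigenvectors between the two operators by a cut-off supported in the common region $\{b<b_0+\eta\}$, and finally to compare their spectra via the resulting quasi-isometry of finite-dimensional spectral subspaces, in the Helffer--Sj\"ostrand style.

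\medskip\noindent\textbf{Agmon estimates.} For any Lipschitz $\phi\geq 0$ on $\mathbb R^2$ and $u\in\dom(H^h_A)$, the standard magnetic Agmon identity reads
\begin{equation*}
\operatorname{Re}\,(H^h_A u,e^{2\phi/\sqrt h}u)=\|(ihd+A)(e^{\phi/\sqrt h}u)\|^2-h\,\|(d\phi)\,e^{\phi/\sqrt h}u\|^2.
\end{equation*}
Combining with \eqref{e:lower-bound} applied to $v:=e^{\phi/\sqrt h}u$ and the eigenequation $H^h_A u=\lambda u$ yields
\begin{equation*}
\int\bigl(h|b|-h|d\phi|^2-\lambda\bigr)e^{2\phi/\sqrt h}|u|^2\,dx\,dy\leq 0.
\end{equation*}
Fixing $\eta_1<\eta_2<\eta_3<\eta$ and using that $\{b<b_0+\eta\}$ is precompact by \eqref{e:def-eta0}, I would build a Lipschitz $\phi$ vanishing on $\{b\leq b_0+\eta_2\}$, equal to a constant $\alpha>0$ on $\{b\geq b_0+\eta_3\}$, and satisfying $|d\phi|^2\leq \tfrac12(b-b_0-\eta_1)_+$. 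Taking $\epsilon(h)\to 0$ small enough, the integrand $h|b|-h|d\phi|^2-\lambda$ is bounded below by $ch$ on $\{b>b_0+\eta_2\}$ for $\lambda\in[hb_0,h(b_0+\eta_1+\epsilon(h))]$; absorbing the contribution from the precompact allowed region into the right-hand side gives
\begin{equation*}
\int e^{2\phi/\sqrt h}|u|^2\,dx\,dy\leq C\|u\|^2,\qquad \|u\|_{L^2(\{b\geq b_0+\eta_3\})}\leq Ce^{-\alpha/\sqrt h}\|u\|.
\end{equation*}
A parallel bound holds for $(ihd+A)u$ and for eigenfunctions of $H^h_{\tilde A}$.

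\medskip\noindent\textbf{Quasimode transfer.} Choose $\chi\in C^\infty_c(\mathbb R^2)$ equal to $1$ on $\{b\leq b_0+\eta_3\}$ and $\supp\chi\subset\{b<b_0+\eta\}$. Since $A=\tilde A$ on $\supp\chi$, for any eigenpair $(u,\lambda)$ of $H^h_A$ in the window
\begin{equation*}
(H^h_{\tilde A}-\lambda)(\chi u)=[H^h_A,\chi]u,
\end{equation*}
and the commutator is a first-order differential operator with smooth coefficients supported in the transition annulus $\{b_0+\eta_3\leq b\leq b_0+\eta\}$. The Agmon bounds give $\|[H^h_A,\chi]u\|=\mathcal O(e^{-\alpha/\sqrt h})\|u\|$ and $\|\chi u-u\|=\mathcal O(e^{-\alpha/\sqrt h})\|u\|$, so $\chi u$ is a normalized quasimode of $H^h_{\tilde A}$ of accuracy $\mathcal O(e^{-\alpha/\sqrt h})$. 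The roles of $A$ and $\tilde A$ being symmetric, the transfer runs in the reverse direction as well.

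\medskip\noindent\textbf{Bijection of spectra.} By Theorem~\ref{t:discrete-spectrum} and a crude eigenvalue count, one may choose $\epsilon(h)\to 0$ such that $h(b_0+\eta_1+\epsilon(h))$ falls in a common gap of both spectra of width much larger than $e^{-\alpha/\sqrt h}$. Let $\Pi^h,\tilde\Pi^h$ denote the spectral projectors of $H^h_A,H^h_{\tilde A}$ onto $[hb_0,h(b_0+\eta_1+\epsilon(h))]$. The map $u\mapsto\tilde\Pi^h(\chi u)$ sends an $L^2$-orthonormal basis of $\Image\Pi^h$ to a family in $\Image\tilde\Pi^h$ whose Gram matrix is the identity modulo $\mathcal O(e^{-\alpha/\sqrt h})$, hence it is linearly independent; this gives $\dim\Image\Pi^h\leq\dim\Image\tilde\Pi^h$, with equality by symmetry. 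Matching eigenvalues in increasing order and applying min-max to the $N\times N$ Hermitian matrix representing $H^h_{\tilde A}$ in this almost-orthonormal basis yields $|\Psi^h(\lambda)-\lambda|\leq e^{-\alpha'/\sqrt h}$ for any $0<\alpha'<\alpha$, producing the desired bijection.

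\medskip\noindent\textbf{Main obstacle.} The subtle point is the last step: eigenvalues can cluster with spacings of order $\mathcal O(h^2)$, which is much larger than $e^{-\alpha/\sqrt h}$ but still requires one to exploit the quasi-isometric structure of $u\mapsto\tilde\Pi^h(\chi u)$ on the full spectral subspace in order to obtain a genuine one-to-one correspondence, rather than merely a Hausdorff-type comparison of the two spectra.
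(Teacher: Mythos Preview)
Your proposal is correct and follows essentially the same route as the paper: Agmon decay outside $\{b<b_0+\eta\}$, quasimode transfer, a pigeonhole choice of $\epsilon(h)$ creating a common spectral gap (via the $O(h^{-1})$ eigenvalue count), and then the Helffer--Sj\"ostrand comparison of the two finite-dimensional spectral subspaces. The only notable simplification in the paper is that it dispenses with your cut-off $\chi$: since $\tilde A=A$ on $U_\eta$, one has directly $H^h_{\tilde A}u_h=\lambda u_h+r$ with $r$ supported in $\mathbb R^2\setminus U_\eta$, so the eigenfunction itself is already a quasimode and only the Agmon bound on $\|u_h\|_{L^2(\mathbb R^2\setminus U_\eta)}+\|(hD_y+A)u_h\|_{L^2(\mathbb R^2\setminus U_\eta)}$ is needed; the paper then invokes \cite{HSI} for the final bijection rather than writing out the Gram-matrix/min-max argument you sketch.
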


\begin{proof}
We choose a function $\epsilon(h)$ defined for any $h\in (0,h_0]$
with some $h_0>0$ so that there exist a function $a(h)$, $h\in
(0,h_0]$, such that
\[
a(h)=\mathcal O\left(h^{N_0}\right)
\]
with some $N_0$ and, for any $h\in (0,h_0]$, we have
\begin{gather*}
\sigma (H^h_A)\cap [h(b_0+\eta_1+\epsilon(h)),
h(b_0+\eta_1+\epsilon(h))+a(h))
=\emptyset\,,\\
\sigma (H^h_{\tilde A})\cap [h(b_0+\eta_1+\epsilon(h))\,,
h(b_0+\eta_1+\epsilon(h))+a(h))=\emptyset\,.
\end{gather*}

To see the existence of such $\epsilon(h)$ and $a(h)$, we use a
polynomial upper estimate for the number of eigenvalues of operators
$H^h_A$ and $H^h_{\tilde A}$. Denote by $N(H^h_A, \lambda)$ the
number of eigenvalues of the operator $H^h_A$ less than or equal to
$\lambda$. By \cite[Lemma~4.6]{RV}, for any $C_1<b_0+\eta_0$ there
exists $C>0$ such that for all $h>0$, we have
\begin{equation}\label{e:N}
N(H^h_A, C_1h)\leq Ch^{-1}.
\end{equation}
Now we just take the interval $[h(b_0+\eta_1), h(b_0+\eta_1+ch))$
with some $c$, divide it into the union of disjoint intervals of the
same length $h^4$, and, using \eqref{e:N}, immediately get the
existence of an interval of the above form free of eigenvalues for
any $h>0$ small enough.

Let $E$ (resp. $\tilde E$) be the eigenspace of $A$ (resp. $\tilde
A$) associated with $\sigma(H^h_A)\cap [hb_0,
h(b_0+\eta_1+\epsilon(h))]$ (resp. $\sigma(H^h_{\tilde A})\cap
[hb_0, h(b_0+\eta_1+\epsilon(h))]$). By \eqref{e:N},  we have
\[
\dim E =\mathcal O(h^{-1})\,,\quad \dim \tilde E =\mathcal O(h^{-1})\,,
\quad h\to 0.
\]

Let $u_h$ be an eigenfunction of $H^h_A$ with the corresponding
eigenvalue $\lambda(h)$, satisfying $\lambda(h)\leq
h(b_0+\eta_1+\epsilon(h))$ for $h\in (0,h_0]$. We observe that
\[
H^h_{\tilde A}u_h=\lambda(h)u_h+r\,,
\]
where
\[
r=2(\tilde A-A)(hD_y+A)u_h+(hD_y(\tilde A-A)+(\tilde A-A)^2)u_h\,.
\]
By assumption, we get
\[
\|r\|\leq C(\|(hD_y+A)u_h\|_{L^2(\mathbb R^2\setminus
U_\eta)}+\|u_h\|_{L^2(\mathbb R^2\setminus U_\eta)})\,,
\]
where
\[
U_\eta=\{(x,y)\in \mathbb R^2 : b(x,y)<b_0+\eta\}\,.
\]
By Agmon estimates (see \cite{HM,HM01}), there exist $C$ and
$\gamma>0$ such that
\begin{equation}\label{e:Agmon}
\|(hD_y+A)u_h\|_{L^2(\mathbb R^2\setminus
U_\eta)}+\|u_h\|_{L^2(\mathbb R^2\setminus U_\eta)} \leq
Ce^{-\frac{\gamma}{h^{1/2}}}\,,
\end{equation}
that implies the estimate
\[
r=\mathcal O\left(e^{-\frac{\gamma}{h^{1/2}}}\right)\,.
\]

Now we can apply  \cite[Proposition 2.5]{HSI} (see also
\cite[Proposition 4.1.1]{He88}) and obtain the following estimates
for the non-symmetric distances between $E$ and $\tilde E$:
\[
\vec d(E,\tilde E)=\vec d(\tilde E,E)=\mathcal
O\left(e^{-\frac{\alpha}{h^{1/2}}}\right)\,,
\]
with some $\alpha>0$. As soon as we have proven these estimates, the
construction of $\Psi^h$ can be done essentially in the same way as
a similar construction in the proof of \cite[Theorem 2.4]{HSI} (see
also \cite[Theorem 4.2.1]{He88}).
\end{proof}

Theorem~\ref{t:comparison} allows us to continue our further
investigations under very strong conditions on $A$. For instance, we
can assume that $b$ is constant outside a compact set in $\mathbb
R^2$. In the sequel, we will assume that, for any $(k,\ell)\in
\mathbb N^2$ with $k+\ell>0\,$,
\[
\sup_{(x,y)\in \mathbb R^2}\left|\frac{\partial^{k+l}A}{\partial
x^k\partial y^\ell}(x,y)\right| <\infty\,.
\]
In particular, the functions $b$ and $\frac{\partial A}{\partial y}$
belong to the symbol class $S=S(1)\,$.

\section{Towards normal forms}\label{s:normalforms}
In this section, we will put the operator $H^h$ given by
\eqref{e:Hh} in a normal form, using very explicit transformations and
asymptotic expansions.

\subsection{Some transformations}\label{s:metaplectic}

First, we write the operator $H^h$ in the form
\[
H^h=-X^2_1-X^2_2\,,
\]
where
\[
X_1=h\partial_x, \quad X_2=h\partial_y+iA(x,y)\,.
\]

\subsubsection{Change of variables}
Now we make a change of variables
\begin{equation}\label{e:trans1}
x_1=A(x,y)\,,\quad y_1=y\,.
\end{equation}
In the new coordinates, we have
\[
{\bf B}=dx_1\wedge dy_1\,.
\]
Define functions $\hat b$ and $\hat A_y$ on $\mathbb R^2$ by
\[
\hat b(A(x,y),y)=b(x,y)\,, \quad  \hat A_y(A(x,y),y)=\frac{\partial
A}{\partial y} (x,y)\,,\quad  (x,y)\in \mathbb R^2\,.
\]
It is easy to see that $\hat b$ and $\hat A_y$ belong to the class $S$. By
\eqref{A00}, it follows that the minimum of $\hat b$ is at $(0,0)$
and
\begin{equation}\label{Ahat00}
\hat A_y(0,0)=0\,.
\end{equation}

For the operator
\[
\bar H^h(x_1,y_1,hD_{x_1},hD_{y_1})=H^h(x,y,hD_{x},hD_{y})\,,
\]
we obtain that
\[
\bar H^h(x_1,y_1,hD_{x_1},hD_{y_1})=-\bar X^2_1-\bar X^2_2\,,
\]
where
\begin{align*}
\bar X_1(x_1,y_1,hD_{x_1},hD_{y_1})= & \hat b(x_1,y_1)h\partial_{x_1}, \\
\bar X_2(x_1,y_1,hD_{x_1},hD_{y_1})= & \hat
A_y(x_1,y_1)h\partial_{x_1}+h\partial_{y_1}+ix_1.
\end{align*}
In the coordinates $(x_1,y_1)$, the flat Euclidean metric
$g=dx^2+dy^2$ is written as
\begin{multline*}
g= \hat b^{-2}(x_1,y_1) \,dx_1^2-2 \hat A_y(x_1,y_1)\hat
b(x_1,y_1)^{-2}\,dx_1\,dy_1\\ +(1+\hat b^{-2}(x_1,y_1)\hat
A_y(x_1,y_1)^2)\,dy_1^2\,.
\end{multline*}
The operator $\bar H^h$ is the magnetic Schr\"odinger operator
associated with this metric and the constant magnetic field. It is
self-adjoint with respect to the Riemannian volume form
\[
\sqrt{\det g}\,dx_1\,dy_1=\hat b(x_1,y_1)^{-1}\,dx_1\,dy_1\,.
\]

Now we move the operator $\bar H^h$ into the Hilbert space
$L^2(\RR^2, dx_1\,dy_1)$, using the unitary isomorphism
\begin{equation}\label{e:unitary}
u\in L^2(\RR^2, \hat b(x_1,y_1)^{-1}\,dx_1\,dy_1)\mapsto \hat
b(x_1,y_1)^{-1/2}u \in L^2(\RR^2,\,dx_1\,dy_1).
\end{equation}
For the corresponding operator $\hat{H}^h =\hat b^{-1/2}\bar H_h
\hat b^{1/2}$, we obtain that
\[
\hat{H}^h(x_1,y_1,hD_{x_1},hD_{y_1})=-\hat X^2_1-\hat X^2_2\,,
\]
where
\begin{align*}
\hat X_1(x_1,y_1,hD_{x_1},hD_{y_1}) = & \hat b^{-1/2}\bar
X_1(x_1,y_1,hD_{x_1},hD_{y_1}) \hat b^{1/2}\\ = & \hat
b(x_1,y_1)h\partial_{x_1}+\frac{1}{2}h\partial_{1}\hat b(x_1,y_1)
\end{align*}
and
\begin{align*}
\hat X_2(x_1,y_1,D_{x_1},D_{y_1})= & \hat b^{-1/2}\bar
X_2(x_1,y_1,D_{x_1},D_{y_1}) \hat b^{1/2}\\ = &
\hat A_y(x_1,y_1)h\partial_{x_1}+ h\partial_{y_1}+ix_1\\
& +\frac{1}{2}h[\hat A_y\hat b^{-1}
\partial_{1}\hat b+\hat b^{-1}\partial_2\hat b](x_1,y_1)\,.
\end{align*}
Here we use notation $\partial_{1}\hat b(x_1,y_1)=\frac{\partial
\hat b}{\partial x_{1}}(x_1,y_1)\,, \partial_{2}\hat
b(x_1,y_1)=\frac{\partial \hat b}{\partial y_{1}}(x_1,y_1)$\,.

\subsubsection{Metaplectic transformations}
Next, we make some metaplectic transformations.

{\bf Partial $h$-Fourier transform}\\
Using the partial Fourier transform in $y_1$ ($F : y_1\to y_2$), we
obtain
\[
Q^h(x_1,y_2,hD_{x_1},hD_{y_2})=
\hat{H}^h(x_1,-hD_{y_2},hD_{x_1},y_2) = -\tilde X^2_1-\tilde X^2_2\,,
\]
where
\begin{align*}
\tilde X_1(x_1,y_2,hD_{x_1},hD_{y_2})& =\hat
X_1(x_1,-hD_{y_2},hD_{x_1},y_2)\\ & = \hat
b(x_1,-hD_{y_2})h\partial_{x_1}+\frac{1}{2}h\partial_{1}\hat
b(x_1,-hD_{y_2})
\end{align*}
and
\begin{align*}
\tilde X_2(x_1,y_2,hD_{x_1},hD_{y_2}) =& \hat
X_1(x_1,-hD_{y_2},hD_{x_1},y_2)\\ = &
\hat A_y(x_1,-hD_{y_2})h\partial_{x_1}+i(y_2+x_1) \\
& + \frac{1}{2}h[\hat A_y\hat b^{-1}
\partial_{1}\hat b+\hat b^{-1}\partial_2\hat b](x_1,-hD_{y_2}).
\end{align*}

A further {\bf linear change of variables}
\begin{equation}\label{e:trans2}
x=x_1+{y_2}, y=-y_2
\end{equation}
gives for
\begin{equation}\label{hatT}
\hat T^h(x,{y},hD_{x},hD_{y};h)=Q^h(x+y,-y,hD_{x},hD_x-hD_y)
\end{equation}
the expression
\begin{equation} \label{hatTh0}
\hat T^h = -\check X^2_1-\check X^2_2\,,
\end{equation}
where
\begin{equation}
\begin{aligned}\label{checkX1}
\check X_1(x,{y},hD_{x},hD_{y};h)=& \tilde
X_1(x+y,-y,hD_{x},hD_x-hD_y)\\
= & \hat b(x+y,hD_{y} -hD_{x})h\partial_{x}
\\ & \quad +\frac{1}{2}h\partial_{1}\hat b(x+y,hD_{y} -hD_{x})
\end{aligned}
\end{equation}
and
\begin{equation}
\begin{aligned}\label{checkX2}
\check X_2(x,{y},hD_{x},hD_{y};h)=& \tilde
X_1(x+y,-y,hD_{x},hD_x-hD_y)\\  = &
\hat A_y(x+y,hD_{y}-hD_{x})h\partial_{x}+ix\\
& \quad + \frac{1}{2}h[\hat A_y\hat b^{-1}
\partial_{1}\hat b+\hat b^{-1}\partial_2\hat b](x+y,hD_{y}-hD_{x})\,.
\end{aligned}
\end{equation}

\subsubsection{Scaling} Finally, we make the {\bf dilation}
$x=h^\frac 12 \tilde x, y=\tilde y$. It should be noted that this
transformation is not metaplectic. Therefore, when we apply it we
leave the $h$-pseudodifferential calculus and loose the possibility
to use all the known results from this theory. Forgetting the tilde,
we get, after division by $h$, a more symmetric expression for the
operator
\[
\widetilde T^h(x,{y},D_{x},D_{y};h)=h^{-1}\widehat T^h(h^{\frac
12}x,y,h^{-\frac 12}D_{x},D_y;h)\,.
\]
We have
\begin{equation}\label{widetildeT}
\widetilde T^h = -\widetilde  X^2_1-\widetilde  X^2_2\,,
\end{equation}
where
\begin{multline}\label{widetildeX1}
\widetilde X_1(x,{y},D_{x},hD_{y};h)= h^{-\frac 12}\check
X_1(h^{\frac 12}x,y,h^{-\frac 12}D_{x},D_y;h)\\
= \hat b(h^{\frac 12}x+y,hD_{y}-h^{\frac
12}D_{x})\partial_{x}+\frac{1}{2}h^{1/2}\partial_{1}\hat b(h^{\frac
12}x+y,hD_{y}-h^{\frac 12}D_{x})
\end{multline}
and
\begin{multline}\label{widetildeX2}
\widetilde X_2(x,{y},D_{x},hD_{y};h)=h^{-\frac 12}\check
X_2(h^{\frac 12}x,y,h^{-\frac 12}D_{x},D_y;h)\\
=\hat A_y(h^{\frac 12}x+y,hD_{y}-h^{\frac 12}D_{x})\partial_{x}+ix
\\ +\frac{1}{2}h^{1/2}[\hat A_y\hat b^{-1}
\partial_{1}\hat b+\hat b^{-1}\partial_2\hat b](h^{\frac 12}x+y,hD_{y}-h^{\frac 12}D_{x})\,.
\end{multline}

The main problem is that the operator $\widetilde T_h$ is written as
a differential operator in $x$ and $D_x$ with pseudodifferential
coefficients in $h^{1/2}x+y, hD_y-h^\frac 12 D_{x}\,$. \\
In the next step, we will rewrite it as an $h$-pseudodifferential
operator in the $y$ variable with values in the class of
differential operators in the $x$ variable.

\subsection{Weyl calculus and justification of the
expansions}\label{s:justification}

For any $h>0$, the operators $h^{1/2}x+y$ and $hD_y-h^{1/2} D_x $
are commuting self-adjoint unbounded linear operators in
$L^2(\mathbb R^2, dx\,dy)$. Spectral theorem allows us to define the
operator $a(h^{1/2}x+y, hD_y-h^{1/2} D_x )$ as a bounded linear
operator in $L^2(\mathbb R^2, dx\,dy)$ for any $a\in S(1)$. In this
subsection, we derive an asymptotic expansion for the operator
$a(h^{1/2}x+y, hD_y-h^{1/2} D_x )$ in the form
\[
a(h^{1/2}x+y, hD_y-h^{1/2} D_x ) \sim \sum_{j\geq 0}h^\frac j2
\sum_{\ell_j=1}^{j+1} b_{j,\ell_j}(x,D_x) a_{j,\ell_j}(y, h D_y)
\]
with some $b_{j,\ell_j}\in S^*({\mathbb R}^2)$ and $a_{j,\ell_j}\in
S$. Here $b(x,D_x)$ denotes the Weyl quantization of the symbol
$b\in S^*({\mathbb R}^2)$ and $a(y, h D_y)$ is the semiclassical
pseudodifferential operator with Weyl symbol $a\in S(1)\,$.

First, we consider the case when $a\in \mathcal S(\mathbb R^2)$\,.
Then we write
\[
a(h^{1/2}x+y, hD_y-h^{1/2} D_x ) = \int \hat a (\tau_1,\tau_2) e^{i
[\tau_1(h^{1/2} x+y) + \tau_2 (hD_y -h^{1/2} D_x)]}
 d\tau_1\,d\tau_2\,,
\]
where $\hat a(\tau_1,\tau_2)$ is the Fourier transform of $a$. \\
This formula can be rewritten in the form
\begin{multline}\label{e:a}
a( h^{1/2}x+y, hD_y-h^{1/2} D_x ) \\ = \int \hat a (\tau_1,\tau_2)
e^{i [\tau_1 y + \tau_2 (hD_y )] }\, e^{ih^{1/2} (\tau_1 x - \tau_2
D_x)}
 d\tau_1\,d\tau_2\,.
\end{multline}
Observe that
\begin{equation}\label{e:Weyl}
\int \hat
a(\tau_1,\tau_2)e^{i(\tau_1y+\tau_2hD_y)}\,d\tau_1\,d\tau_2=a(y,hD_y)\,.
\end{equation}
Indeed, we have
\[
e^{i(\tau_1y+\tau_2hD_y)}u(y)=e^{\frac12ih\tau_1\tau_2}e^{i\tau_1y}u(y+h\tau_2)\,.
\]
Using this formula, for the operator
\[
A=\int \hat
a(\tau_1,\tau_2)e^{i(\tau_1y+\tau_2hD_y)}\,d\tau_1\,d\tau_2\,,
\]
we get
\begin{align*}
Au(y)=& \int \hat
a(\tau_1,\tau_2)e^{\frac12ih\tau_1\tau_2}e^{i\tau_1y}u(y+h\tau_2)\,d\tau_1\,d\tau_2\\
=& \frac{1}{(2\pi)^2}\int a(\xi_1,\xi_2)
e^{-i(\tau_1\xi_1+\tau_2\xi_2)} e^{\frac12ih\tau_1\tau_2}
e^{i\tau_1y}
u(y+h\tau_2)\,d\tau_1\,d\tau_2\,d\xi_1\,d\xi_2\\
=& \frac{1}{(2\pi)^2} \int
d\tau_1\,e^{-i\tau_1(\xi_1-\frac12h\tau_2-y)} \int e^{-i\tau_2\xi_2}
a(\xi_1,\xi_2) u(y+h\tau_2) \,d\tau_2\,d\xi_1\,d\xi_2.
\end{align*}
Now we use the Fourier transform inversion formula in $\tau_1$ and
$\xi_1$:
\[
Au(y) = \frac{1}{2\pi} \int e^{-i\tau_2\xi_2}
a\left(\frac12h\tau_2+y,\xi_2\right) u(y+h\tau_2) \,d\tau_2\,d\xi_2\,.
\]
Finally, we make the change of variables $x=y+h\tau_2$ and get
\[
Au(y) = \frac{1}{2\pi h} \int e^{\frac{i}{h}(y-x)\xi_2}
a\left(\frac{x+y}{2},\xi_2\right) u(x) \,dx\,d\xi_2 =a(y,hD_y)\,,
\]
that completes the proof of \eqref{e:Weyl}.

We can then expand the right hand side of \eqref{e:a} in powers of
$h^{1/2}$ and get
\begin{multline}\label{e:expand-e:a}
a(h^{1/2}x+y, hD_y-h^{1/2} D_x ) \\ =\sum_{k=0}^N
\frac{1}{k!}i^kh^{k/2} \int (\tau_1 x - \tau_2 D_x)^k \hat a
(\tau_1,\tau_2)  e^{i [\tau_1 y + \tau_2 (hD_y )]
}\,d\tau_1\,d\tau_2 +R_N(h) \,,
\end{multline}
where
\begin{multline*}
R_N(h)=\frac{1}{N!}i^{N+1}h^{(N+1)/2} \int \hat a (\tau_1,\tau_2)
e^{i [\tau_1 y + \tau_2 (hD_y )] }\times \\
\times \left(\int_0^1(1-t)^N e^{ith(\tau_1 x - \tau_2 D_x)} (\tau_1
x - \tau_2 D_x)^{N+1}dt \right) \,d\tau_1\,d\tau_2.
\end{multline*}
For $k\in \mathbb N$, consider the Sobolev space $B^k(\mathbb R)$
given by
\begin{equation}\label{e:defBk}
B^k(\mathbb R)=\{u\in L^2(\mathbb R) : x^\alpha D^\beta_xu \in
L^2(\mathbb R)\ {\rm for}\ \alpha+\beta \leq k\}\,.
\end{equation}
Then for any $h\in (0,1)$ and for any $s\geq 0$, there exists
$C_s>0$ such that, for any $h\in (0,h_0]$, we have
\[
\|R_N(h) : B^{s+N+1}(\mathbb R_x)\times L^2(\mathbb R_y)\to
B^{s}(\mathbb R_x)\times L^2(\mathbb R_y) \| \leq C_s
h^\frac{N+1}{2}\,.
\]

Using \eqref{e:Weyl}, we can compute explicitly the first
coefficients in the expansion~\eqref{e:expand-e:a}. For instance,
for $N=2$, we get
\begin{multline}\label{e:a-explicit}
a(h^{1/2}x+y, hD_y-h^{1/2} D_x )\\
= a(y,hD_y)+ ih^{1/2}(x (D_1a)(y,hD_y)-
D_x (D_2a)(y,hD_y))\\
- \frac{1}{2}h (x^2 (D^2_1a)(y,hD_y)-(xD_x+D_xx) (D_1D_2a)(y,hD_y)
+ D_x^2 (D^2_2a)(y,hD_y)) \\
+ R_2(h)\,.
\end{multline}
This is what we would have obtained by considering the non
commutative Taylor expansion with respect to $h^{1/2} x$ and
$h^{1/2} D_x$ at the "point" $(y, hD_y)\,$. More generally, we get
the following result.

\begin{prop}\label{propA1}
If $a$ is a semiclassical symbol in $\mathbb R^2$, then we have:
\begin{equation}\label{expa}
a(h^{1/2}x+y, hD_y-h^{1/2} D_x,h ) \sim \sum_{j\geq 0} h^{\frac j2}
\left(\sum_{k_1 + k_2 \leq j} a_{k_1,k_2,j}(y,hD_y) x^{k_1}
D_x^{k_2}\right)\,, \end{equation} where $a_{000}(y,\eta)=
a(y,\eta)$, and the remainder $R_N(h)$ defined for any $N\in \mathbb
N$ by
\begin{multline*}
R_N(h)=a(h^{1/2}x+y, hD_y-h^{1/2} D_x,h )\\ - \sum_{j=0} ^N h^{\frac
j2} \left(\sum_{k_1 + k_2 \leq j} a_{k_1,k_2,j}(y,hD_y) x^{k_1}
D_x^{k_2}\right)\,,
\end{multline*}
satisfies the following condition: for any $N\in \mathbb N$ there
exists $h_0>0$ such that, for any $h\in (0,h_0]$ and for any $s\geq
0$, we have
\[
\|R_N(h) : B^{s+N+1}(\mathbb R_x)\times L^2(\mathbb R_y)\to
B^{s}(\mathbb R_x)\times L^2(\mathbb R_y) \| \leq C_s
h^\frac{N+1}{2}\,.
\]
\end{prop}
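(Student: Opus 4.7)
The plan is to extend the Schwartz-symbol computation of (\ref{e:a})--(\ref{e:a-explicit}) to a general semiclassical symbol $a\in S(1)$ and to supply a quantitative bound on the remainder. The starting observation is that the operators $Y:=h^{1/2}x+y$ and $\Xi:=hD_y-h^{1/2}D_x$ commute: the cross brackets $[h^{1/2}x,hD_y]$ and $[y,-h^{1/2}D_x]$ vanish by disjointness of variables, while $[h^{1/2}x,-h^{1/2}D_x]=-ih$ cancels $[y,hD_y]=ih$. Hence $a(Y,\Xi,h)$ is well-defined by joint functional calculus, and the factorization
\[
 e^{i(\tau_1 Y+\tau_2\Xi)}=e^{i(\tau_1 y+\tau_2 hD_y)}\,e^{ih^{1/2}(\tau_1 x-\tau_2 D_x)}
\]
used in the Schwartz computation is rigorously justified by the fact that the two exponentials act on disjoint tensor factors.

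First I would formalize the Schwartz case: apply the integral Taylor formula to $e^{ih^{1/2}(\tau_1 x-\tau_2 D_x)}$ at order $N+1$ and substitute into the Fourier representation of $a$. Expanding the noncommutative power $(\tau_1 x-\tau_2 D_x)^k$ produces a finite sum of monomials $\tau_1^{a_1}\tau_2^{a_2}x^{k_1}D_x^{k_2}$ with $a_1+a_2=k_1+k_2=k$. The Fourier correspondence $\tau_1^{a_1}\tau_2^{a_2}\hat a=(-i)^{a_1+a_2}\widehat{\partial_y^{a_1}\partial_\eta^{a_2}a}$ combined with the identity (\ref{e:Weyl}) converts each $\tau$-integral into the Weyl operator $(-i)^{k}(\partial_y^{a_1}\partial_\eta^{a_2}a)(y,hD_y)$, which identifies the coefficients $a_{k_1 k_2 j}$ and yields $a_{000}(y,\eta)=a(y,\eta)$ at leading order.

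To upgrade from Schwartz to a general semiclassical symbol, where $\hat a$ is only a tempered distribution, I would use the linear change of variables $y_+=h^{1/2}x+y$, $y_-=y$. A direct computation gives $D_x=h^{1/2}D_{y_+}$ and $D_y=D_{y_+}+D_{y_-}$, hence $Y=y_+$ and $\Xi=hD_{y_-}$; thus $a(Y,\Xi,h)$ reduces to the standard semiclassical Weyl operator $a(y_+,hD_{y_-},h)$ acting in $y_-$ with $y_+$ as a parameter. The formal expansion (\ref{expa}) then follows from a Taylor expansion of the symbol $a$ in its first argument around $y_+=y$, which is legitimate for $a\in S(1)$ by the standard semiclassical Weyl calculus, and the terms agree with those obtained in the Schwartz computation.

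The main technical obstacle is the uniform estimate of $R_N(h)$ in the scale $B^{s+N+1}(\mathbb R_x)\times L^2(\mathbb R_y)\to B^{s}(\mathbb R_x)\times L^2(\mathbb R_y)$. Writing
\[
 R_N(h)=h^{(N+1)/2}\int_0^1(1-t)^N Q_{N+1}(t,h)\,dt\,,
\]
one factors $Q_{N+1}(t,h)$ as a product of (i) a polynomial $P(x,D_x)$ of total degree $N+1$, bounded from $B^{s+N+1}(\mathbb R_x)$ to $B^{s}(\mathbb R_x)$, (ii) a Weyl operator in $(y,hD_y)$ whose symbol is an $(N+1)$-th derivative of $a$, hence still in $S(1)$ and uniformly bounded on $L^2(\mathbb R_y)$ by Calder\'on--Vaillancourt, and (iii) a uniformly bounded family of conjugating unitaries $e^{ith^{1/2}(\tau_1 x-\tau_2 D_x)}$ on each $B^k(\mathbb R_x)$. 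The delicate point is the uniformity in $t\in[0,1]$ of these mapping properties, which requires tracking commutators of $x$ and $D_x$ with the conjugating unitaries; combining the tensor-product bounds then yields the desired $O(h^{(N+1)/2})$ norm estimate.
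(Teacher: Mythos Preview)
Your overall approach coincides with the paper's: the argument for Proposition~\ref{propA1} is precisely the discussion preceding it in Subsection~\ref{s:justification}, namely the Fourier representation~\eqref{e:a}, the identification~\eqref{e:Weyl}, and the Taylor expansion~\eqref{e:expand-e:a} of $e^{ih^{1/2}(\tau_1x-\tau_2D_x)}$ yielding the explicit remainder integral. Your observation that the linear change $(x,y)\mapsto(y_+,y_-)=(h^{1/2}x+y,\,y)$ turns $a(Y,\Xi,h)$ into the ordinary Weyl operator $a(y_+,hD_{y_-},h)$ is a clean way to see why the functional calculus is well defined for $a\in S(1)$ and is not made explicit in the paper; it is a genuine simplification at the conceptual level.

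There is, however, a real gap in your remainder estimate. You propose to factor $Q_{N+1}(t,h)$ as a product of (i) a fixed polynomial $P(x,D_x)$, (ii) a Weyl operator $(\partial^{\alpha}a)(y,hD_y)$, and (iii) the unitaries $e^{ith^{1/2}(\tau_1x-\tau_2D_x)}$. But (ii) arises only \emph{after} integrating in $(\tau_1,\tau_2)$, whereas (iii) still depends on $(\tau_1,\tau_2)$; the three pieces do not separate across the $\tau$-integral. Moreover, the unitaries $e^{ith^{1/2}(\tau_1x-\tau_2D_x)}$ are \emph{not} uniformly bounded on $B^k(\mathbb R_x)$: commuting $x$ or $D_x$ through the exponential produces factors growing like $(1+th^{1/2}|\tau|)^k$. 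For $a\in\mathcal S(\mathbb R^2)$ this polynomial growth in $\tau$ is harmless, being absorbed by the rapid decay of $\hat a$, and this is exactly how the paper's (unstated) bound goes through. For a general $a\in S(1)$, where $\hat a$ is only a tempered distribution, your factorization does not close. The two natural fixes are: either prove the bound first for Schwartz $a$ with constants depending only on finitely many $S(1)$-seminorms of $a$ and then pass to the limit, or recombine the exponentials as $e^{i[\tau_1(y+th^{1/2}x)+\tau_2(hD_y-th^{1/2}D_x)]}$ and recognize the $\tau$-integral as a Weyl quantization of $\partial^{\alpha}a$ in the pair $(y+th^{1/2}x,\,hD_y-th^{1/2}D_x)$, to which a Calder\'on--Vaillancourt type bound (uniform in $t\in[0,1]$ since the effective Planck constant is $h(1-t^2)\le h$) applies directly.
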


\begin{rem}
The (standard) problem is that $a$ is not defined everywhere. But if
one has some information on the frequency set of the quasi-mode, one
can assume that $a$ is compactly supported (or has an extension to a
semiclassical symbol in $S$). The results then will not depend on
the choice of the extension.
\end{rem}

\begin{rem}
On the right hand side of \eqref{expa}, the operators will be
applied on expression of the form
$$
w(x,y,h)\sim \sum_\ell h^{\frac \ell 2} \left(\sum_{k=0}^{k_\ell}
u_{\ell,k} (y,h) v_{\ell, k} (x)\right) \,,
$$
where the $u_{\ell,k} (y,h)$ have their $h$-microsupport close to
$(0,0)$ (our symbols in $(y,\eta)$ are only defined there) and the
$v_{\ell,k}$ are functions in $\mathcal S(\mathbb R)$ (actually
Gaussians multiplied by polynomials).
\end{rem}

\begin{rem}
For the treatment of  the action of $\widetilde T^h$, we have to
compose the expansions obtained in Proposition \ref{propA1} with
$x^kD_x^\ell$ and sum various terms of this type.
\end{rem}

\begin{rem}
Together with the maximal estimates on the eigenfunctions (see the
estimates \eqref{control1} below), we have the possibility to stop
the expansion in degree $N$, the remainder being controlled. One can
then follow formal Grushin's method using finite expansions in
powers of $h$ and give a non formal meaning to all the constructions
modulo an error term of order $\mathcal O (h^{\tilde N})$
where $\tilde N$ depends on $N$ and can be made arbitrarily large by
choosing $N$ large enough.
\end{rem}

\subsection{The explicit expansion}\label{s:dilation}
In this subsection, we will use the results of
Subsection~\ref{s:justification} to rewrite the operator $\widetilde
T^h$ as an $h$-pseudodifferential operator in the $y$ variable with
values in the class of differential operators in the $x$ variable:
\begin{equation}\label{hatTh1}
\widetilde T^h(x,{y},D_{x},D_{{y}};h)\sim \sum_{k=0}^\infty
h^{k/2}S_k(x,{y},D_{x},hD_{{y}}),
\end{equation}
For this purpose, we first expand the coefficients in $h^{1/2}x$ and
$h^{1/2} D_x$ in the formulae \eqref{widetildeX1} and
\eqref{widetildeX2}. By \eqref{e:a-explicit}, we obtain that
\begin{multline*}
\frac{1}{i}\widetilde X_1 = \hat b(y,hD_y)D_x \\
\begin{aligned}
 & +
ih^{1/2}(\frac12(xD_x+D_xx) (D_1\hat b)(y,hD_y)-
D^2_x (D_2\hat b)(y,hD_y))\\
& - h (\frac14 (x^2D_x+D_xx^2) (D^2_1\hat b)(y,hD_y) -
\frac12(xD^2_x+D^2_xx)(D_1D_2\hat b)(y,hD_y)\\ & + \frac12 D_x^3
(D^2_2\hat b)(y,hD_y))+ {\mathcal O}(h^{3/2})
\end{aligned}
\end{multline*}
and
\begin{align*}
\frac{1}{i}\widetilde X_2 = & \hat A_y(y,hD_y)D_x+x+
h^{1/2}[(xD_x+D_xx)(\partial_1\hat A_y)(y,hD_y)\\ & - D^2_x
(\partial_2\hat A_y)(y,hD_y) +\frac{1}{2}(-D_1(\hat A_y\hat
b^{-1})\hat b+\hat b^{-1}D_2\hat b)(y,hD_y)]\\ & + h
[-\frac14(x^2D_x+D_xx^2) (D^2_1\hat A_y)(y,hD_y)\\
& +\frac12(xD^2_x+D^2_xx)(D_1D_2\hat A_y)(y,hD_y)
-\frac12 D_x^3 (D^2_2\hat A_y)(y,hD_y)\\
& + \frac12i (x (-D_1[D_1(\hat A_y\hat b^{-1})\hat b]+D_1[\hat
b^{-1}D_2\hat b])(y,hD_y)\\ & - D_x (-D_2[D_1(\hat A_y\hat
b^{-1})\hat b]+D_2[\hat b^{-1}D_2\hat b])(y,hD_y))]+ {\mathcal
O}(h^{3/2}).
\end{align*}
Next, we substitute these asymptotic formulas into
\eqref{widetildeT} that gives the desired asymptotic expansion
\eqref{hatTh1}.

We now compute the first two coefficients in this expansion.

For the coefficient $S_0$, we get:
\begin{multline}\label{S0}
S_0(x,{y},D_{x},hD_{{y}}) \\ = (\hat b^2+\hat
A_y^2)({y},hD_{y})D_{x}^2 - \hat
A_y({y},hD_{y})(x\,D_{x}+D_{x}x)+x^2\,.
\end{multline}
The Weyl vector valued $h$-symbol of $S_0$ is given by
\begin{align}\label{sigma0}
\sigma_0(x,D_x, y,\eta)= (\hat b^2(y,\eta)+\hat
A_y^2(y,\eta))D_{x}^2 -\hat A_y(y,\eta)(xD_{x}+D_{x}\,x)+x^2\,.
\end{align}
For a fixed $(y,\eta)$, this is an harmonic oscillator, whose
spectrum is given by
\begin{equation}\label{LL}
\lambda_k(y,\eta)=(2k+1)\hat b(y,\eta)\,, \quad k\in \mathbb N\,.
\end{equation}
This could seem surprising but one way to recognize this simply is
to observe that, by a gauge transformation $\exp \left(i
\frac{\alpha}{(b^2 + \alpha^2)} \frac{x^2}{2}\right) $, with $\alpha
=\hat A_y(y,\eta)$ and $b=\hat b(y,\eta)$,
$\sigma_0(x,D_{x},y,\eta)$ is unitary equivalent to $(b^2 +
\alpha^2) D_x^2 + \frac{b^2}{b^2 +\alpha^2} x^2$. An additional
dilation permits us to arrive at $b(D_x^2 + x^2)\,$. In particular,
we get for the $L^2$-normalized ground state of $\sigma_0(x,D_x,
y,\eta)$:
\begin{equation}\label{defhyeta}
h_{y,\eta}(x) = \rho (y,\eta) \exp - \delta(y,\eta) x^2\,,
\end{equation}
with $\rho(y,\eta) >  0$ and $\Re \delta (y,\eta) >0$, $\rho$ and
$\delta$ depending smoothly on $(y,\eta)$.

The coefficient $S_1$ is given by
\begin{align*}
S_1= & i\Big(\frac12(xD^2_x+2D_xxD_x+D^2_xx) (\hat bD_1\hat
b)(y,hD_y)- 2D^3_x (\hat bD_2\hat b)(y,hD_y)\Big)\\ & + (\hat
A_y(y,hD_y)D_x+x)
\Big[(xD_x+D_xx)(\partial_1\hat A_y)(y,hD_y) \\
& - D^2_x (\partial_2\hat A_y)(y,hD_y) +\frac{1}{2}(-D_1(\hat A_y\hat b^{-1})\hat b+\hat b^{-1}D_2\hat b)(y,hD_y)\Big]\\
& + \Big[(xD_x+D_xx)(\partial_1\hat A_y)(y,hD_y)- D^2_x (\partial_2\hat A_y)(y,hD_y)\\
& +\frac{1}{2}(-D_1(\hat A_y\hat b^{-1})\hat b+\hat b^{-1}D_2\hat
b)(y,hD_y)\Big](\hat A_y(y,hD_y)D_x+x)\,.
\end{align*}
We observe that $S_1$ inverses the parity in the $x$ variable.

Computation of $S_2$ is rather lengthy and we will omit it here. We
only observe that $S_2$ respects the parity in the $x$ variable.

These computations could be useful  for determining the
sub-principal symbol of the effective operator $p_{\rm
eff}(y,hD_y;h,z)$.  We will explain this in Section \ref{s7}.  But
for proving the existence of the symbol $p_{\rm eff}(y,\eta;h,z)$,
we need only the structure of the operators $S_j$.

\section{Eigenfunctions estimates}\label{s:eigenfunctions}
\subsection{On the frequency set of eigenfunctions}\label{s:frequency} The frequency set
was introduced by V. Guillemin and S. Sternberg \cite{GuiSt} but we
prefer for our need to refer to the books of D. Robert \cite{Rob} or
M. Zworski \cite{Z}. This is the analog of the wave front set of
H\"ormander in the semi-classical context.

\begin{defn} Given an open subset $\Omega $ of ${\mathbb R}^m$ and a map $h\in
(0,h_0]\mapsto T_h\in {\mathcal D}^\prime (\Omega)\,$, a point
$(x_0,p_0)\in {\mathbb R}^m_x\times {\mathbb R}^m_p$ is not in the
frequency set $F[T_h]$ of $T_h$ if there exists $\phi\in
C^\infty_c({\mathbb R}^m)$ such that $\phi(x_0)\neq 0$ and a
neighborhood $V_{p_0}$ of $p_0$ such that
\[
<\phi(x)e^{-ih^{-1}x\cdot p}, T_h>=\mathcal O(h^\infty)\,,\quad
h\rightarrow 0\,,
\]
uniformly with respect to $p\in V_{p_0}\,$.\end{defn}

There exists also an $h$-pseudodifferential characterization of the
frequency set for a family $T_h$ in $L^2$\,. A point $(x_0,p_0)\in
{\mathbb R}^m_x\times {\mathbb R}^m_p$ is not in the frequency set
$F[T_h]$ of $T_h$ if there exists an $h$-pseudodifferential operator
$\chi (x, hD_x)$ whose symbol is elliptic at $(x_0,p_0)$ such that
$\chi (x, hD_x) T_h = \mathcal O (h^\infty)\,$ in $L^2$.

The following result is rather standard:

\begin{prop}
Suppose that $u_h$ is an $L^2$ normalized eigenfunction of $H_h$
corresponding to an eigenvalue $\lambda_h$ such that $\lambda_h \leq
C h$ for $h\in (0,1)$. Then the frequency set of $u_h$ is  non empty
and contained in
\[
F[u_h]\subset \{(x,y,\xi,\eta)\in T^*{\mathbb R}^2 : \xi=0\,,
\eta=-A(x,y), b(x,y)\leq C\}\,.
\]
\end{prop}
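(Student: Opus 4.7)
The plan is to verify each of the three assertions in the conclusion by a separate microlocal argument, using the $h$-pseudodifferential characterization of the frequency set recalled just before the proposition.

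First, nonemptiness of $F[u_h]$ will follow from the $L^2$-normalization $\|u_h\|=1$: were $F[u_h]$ empty, covering $T^*\mathbb{R}^2$ by a partition of unity made of $h$-pseudodifferential cutoffs each acting as $\mathcal{O}(h^\infty)$ on $u_h$ would force $\|u_h\|=\mathcal{O}(h^\infty)$, contradicting the normalization.

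For the inclusion $F[u_h]\subset\{\xi=0,\ \eta=-A(x,y)\}$, I would invoke semiclassical elliptic regularity applied to $(H^h-\lambda_h)u_h=0$. A direct Weyl-calculus computation shows that the exact Weyl symbol of $H^h$ is $p_0(x,y,\xi,\eta)=\xi^2+(\eta+A(x,y))^2$, independent of $h$, and since $\lambda_h=\mathcal{O}(h)$ the full semiclassical principal symbol of $H^h-\lambda_h$ is still $p_0$. At any $(x_0,y_0,\xi_0,\eta_0)$ where $p_0>0$, a standard parametrix construction produces an $h$-pseudodifferential operator $Q^h$ and a symbol $\chi$ elliptic at the point such that $Q^h(H^h-\lambda_h)=\chi(x,y,hD_x,hD_y)+\mathcal{O}(h^\infty)$; applying to $u_h$ yields $\|\chi(x,y,hD_x,hD_y)u_h\|=\mathcal{O}(h^\infty)$, excluding the point from $F[u_h]$.

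For the spatial inclusion $b(x,y)\leq C$, I would start from the magnetic lower bound \eqref{e:lower-bound}. For $\phi\in C_c^\infty(\mathbb{R}^2)$ supported where $b>C+\epsilon$, applying $H^h\geq hb$ to $\phi u_h$ and combining with the eigenvalue equation gives
\[
h\epsilon\|\phi u_h\|^2\leq|([H^h,\phi]u_h,\phi u_h)|.
\]
Expanding $[H^h,\phi]=-ih\sum_j(X_j\partial_j\phi+\partial_j\phi\,X_j)$ with $X_1=hD_x$, $X_2=hD_y+A$, and using $\|X_j u_h\|\leq\lambda_h^{1/2}=\mathcal{O}(h^{1/2})$, would give a crude initial bound $\|\phi u_h\|=\mathcal{O}(h^{1/4})$. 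I would then iterate through a nested family of cutoffs, all supported in $\{b>C+\epsilon/2\}$, to upgrade this successively to $\mathcal{O}(h^\infty)$; in fact the classical magnetic Agmon estimates as in \eqref{e:Agmon} and \cite{HM,HM01} already give exponential decay $e^{-\gamma/\sqrt{h}}$ of $u_h$ in $\{b>C+\epsilon\}$, which certainly implies $\|\phi u_h\|=\mathcal{O}(h^\infty)$ and thereby excludes any $(x_0,y_0,\xi,\eta)$ with $b(x_0,y_0)>C$ from $F[u_h]$.

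The hard part will be this last step: the magnetic lower bound is only an integral inequality, and promoting it to the pointwise microlocal conclusion requires an Agmon-type iteration whose natural semiclassical parameter is $h^{1/2}$ rather than $h$ (the Agmon weight being $\sqrt{b-C}$). Fortunately this kind of estimate is by now classical in magnetic Schr\"odinger analysis and can be cited directly.
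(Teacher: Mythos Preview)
Your proposal is correct and follows essentially the same approach as the paper: the paper's proof is a two-line sketch invoking ``elliptic theory for $h$-pseudodifferential operators combined with Agmon estimates,'' and you have simply unpacked these two ingredients (plus the standard nonemptiness argument the paper leaves implicit). Your only imprecision is the partition-of-unity argument for nonemptiness on the noncompact phase space $T^*\mathbb{R}^2$, but once the other two inclusions confine $F[u_h]$ to a compact set this becomes routine, and in any case the implication ``$F[u_h]=\emptyset$ and $u_h$ $h$-tempered $\Rightarrow \|u_h\|=\mathcal{O}(h^\infty)$'' is standard (see \cite{Z}).
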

\begin{proof}
This is just a combination of the elliptic theory for
$h$-pseudo-differen\-ti\-al operators combined with Agmon estimates
(see above around  \eqref{e:Agmon}).
\end{proof}

\begin{rem}
As a consequence, if we consider a cut-off function $\chi$ equal to
$1$ on a fixed neighborhood of $b^{-1} ((-\infty,C))$ and with
support in $b^{-1} ((-\infty, C'))$ with some $C^\prime >C$, then
$\chi u_h$ has the same frequency set and satisfies:
$$
(H^h-\lambda_h) (\chi u_h) = \mathcal O (h^\infty)\,.
$$
\end{rem}

We can now follow the frequency set by change of coordinates or more
generally by the action of $h$-Fourier integral operators (this
includes the $h$-Fourier transform) the transformation being given
by the associated canonical transformation. Let us consider
transformations introduced in Subsection~\ref{s:metaplectic}.

After the change of variables \eqref{e:trans1}, we get for the
transformed eigenfunction $\hat u_h(x_1,y_1)=u_h(x,y)$,
\[
F[\hat u_h]\subset \{(x_1,y_1,\xi_1,\eta_1)\in T^*{\mathbb R}^2 :
\xi_1=0, \eta_1=-x_1\,, \,\hat b(x_1,y_1)\leq C\}\,.
\]
Now we apply the unitary isomorphism \eqref{e:unitary}. For the
transformed eigenfunction
\[
v_h(x_1,y_1)=\hat b(x_1,y_1)^{-1/2} u_h(x_1,y_1)\,,
\]
we obtain that
\[
F[v_h]=F[u_h]\subset \{(x_1,y_1\,,\xi_1,\eta_1)\in T^*{\mathbb R}^2
: \xi_1=0,\, \eta_1=-x_1, \,\hat b(x_1,y_1)\leq C\}\,.
\]
Next we make the partial Fourier transform in $y_1\,$. So the
corresponding eigenfunction $w_h$ is the partial Fourier transform
in $y_1$ of $v_h$, and therefore
\begin{align*}
F[w_h]= & \{(x_1,y_2,\xi_1,\eta_2)\in T^*{\mathbb R}^2 :
(x_1,-\eta_2,\xi_1,y_2)\in F[v_h]\}\\ \subset &
\{(x_1,y_2,\xi_1,\eta_2)\in T^*{\mathbb R}^2 : \xi_1=0, y_2=-x_1\,,
\hat b(x_1,-\eta_2)\leq C\}\,.
\end{align*}
Next we make a change of variables  \eqref{e:trans2}, which gives
for the corresponding eigenfunction $\hat{u}_h$
\begin{equation}\label{e:freq-set}
F[\hat{u}_h]\subset \{(x,y,\xi,\eta)\in T^*{\mathbb R}^2 : x=0\,,\,
\xi=0\,, \,\hat b(y,\eta)\leq C\}\,.
\end{equation}

Finally, we make the dilation $x=h^{1/2} \tilde x\,, \, y=\tilde
y\,$. The corresponding eigenfunction is given by $\widetilde
u_h(\tilde x, \tilde y)=h^{-1/2}\hat{u}_h(h^{-1/2}\tilde x, \tilde
y)$. Note that, in this step, we do not control the frequency set,
but, for any natural $k$ and $\ell$, the asymptotic behavior of the
norm of $x^k D_x^\ell \widetilde u_h$ as $h\to 0$ is well controlled
as will be shown in Subsection \ref{AppH}.

\subsection{Maximal estimates}\label{AppH}
 In the following, we will use the
asymptotic expansions of Subsection~\ref{s:justification} applied to
the eigenfunction $\widetilde u_h$ of the operator $\widetilde T_h$
introduced above. To analyze the action of the remainder in these
asymptotic expansions on $\widetilde u_h$, we have to control the
$L^2$ norm of $x^\alpha D_x^\beta \widetilde u_h$. Formally the
possibility of such a control seems reasonable taking into account
the information that $\widetilde T_h^k \widetilde u_h = \lambda_h^k
\widetilde u_h = \mathcal O (h^k)$ in $L^2\,$. To prove the
corresponding estimate rigorously, we observe that before the
metaplectic transformations introduced in
Subsection~\ref{s:metaplectic} such an estimate  is related to a
``regularity'' estimate (\`a la H\"ormander) for polynomials of
vector fields  or more precisely (\`a la Helffer-Nourrigat) for the
iterates of the magnetic Laplacian.

More precisely, in the flat Euclidean space ${\mathbb R}^3$ with
coordinates $(x,y,t)\,$, consider the vector fields
\[
Y_1 = \frac{\partial}{\partial x}, \quad Y_2 =
\frac{\partial}{\partial y} + A(x,y) \frac{\partial}{\partial t}\,.
\]
Then we have
\[
[Y_1,Y_2](x,y,t)= b(x,y)\frac{\partial}{\partial t}\neq 0\,.
\]
Thus, for any $(x,y,t)\in {\mathbb R}^3$, the vector fields $Y_1,
Y_2$ satisfy the H\"ormander condition $(C.H)_{2,(x,y,t)}$
\cite[Chapter I, \S 1]{HN}, which means that the vectors
$Y_1(x,y,t), Y_2(x,y,t)$ and $[Y_1,Y_2](x,y,t)$ span the tangent
space $T_{(x,y,t)}{\mathbb R}^3\,$.

Consider the operator
\[
P=Y_1^2+Y_2^2\,.
\]
By Theorem 1.3 from \cite[Chapter IX]{HN}, we get, for any $u\in
C^\infty_c(V\times \mathbb R)$, where $V$ is a sufficiently small
neighborhood of $(0,0)\,$, that there exists $C$ such that
\[
\sum_{\alpha_1+\alpha_2\leq 2} \|Y_1^{\alpha_1}Y_2^{\alpha_2} u \|^2
\leq C_\alpha ( \|P u\|^2 + || u||^2) \,.
 \]
Similarly, for any $N$ there exists $C_{N}$ such that
\[
\sum_{\alpha_1+\alpha_2\leq 2 N}\|Y_1^{\alpha_1}Y_2^{\alpha_2} u
\|^2 \leq C_{N} (\|P^N u \|^2 + \|u\|^2)\,.
\]
Taking the partial Fourier transform in the $t$-variable, we get
\begin{multline*}
\sum_{\alpha_1+\alpha_2\leq 2 N}\|\left(\frac{\partial}{\partial
x}\right)^{\alpha_1}\left(\frac{\partial}{\partial y} + iA(x,y)
\tau\right)^{\alpha_2} u \|^2\\
\leq C_{N} (\|\left(\left(\frac{\partial}{\partial
x}\right)^2+\left(\frac{\partial}{\partial y} + iA(x,y)
\tau\right)^2\right)^N u \|^2 + \|u\|^2)\,.
\end{multline*}
Dividing by $\tau^{4N}$ and introducing $h=\tau^{-1}$, we obtain that
\[
\sum_{|\alpha|\leq 2 N}  h^{4N-2(\alpha_1+\alpha_2)} \|
X_1^{\alpha_1}X_2^{\alpha_2} u \|^2 \leq C_{N}\left( \|(H^h)^N u
\|^2 + h^{4N} \|u\|^2\right)\,.
\]
\begin{rem} Alternately, we could have used the Boutet de Monvel
results on hypoelliptic operators with multiple characteristics (see
in \cite{BGH}) in the symplectic case.
\end{rem}

Hence we get
\begin{prop}
If $u_h$ is an $L^2$ normalized eigenfunction of $H^h$ corresponding
to an eigenvalue $\lambda_h$ such that $\lambda_h \leq C h$ for
$h\in (0,1)$, then for any $\alpha=(\alpha_1,\alpha_2)\in \mathbb
N^2$ there exists a constant $C_{\alpha}$ such that, for $h\in
(0,1)$,
\begin{equation}\label{dec}
 \| X_1^{\alpha_1}X_2^{\alpha_2} u_h
\|_{L^2(\mathbb R^2)} \leq C_{\alpha} h^{(\alpha_1+\alpha_2)/2}\,.
\end{equation}
\end{prop}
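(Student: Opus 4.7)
The plan is to combine the subelliptic estimate derived in the paragraph preceding the proposition,
\[
\sum_{|\beta|\leq 2N} h^{4N-2|\beta|}\,\|X_1^{\beta_1}X_2^{\beta_2}u\|^2 \leq C_N\bigl(\|(H^h)^N u\|^2+h^{4N}\|u\|^2\bigr),
\]
with the eigenvalue hypothesis $\lambda_h\leq Ch$. First I would substitute $u=u_h$ and use $(H^h)^N u_h=\lambda_h^N u_h$ together with $\|u_h\|=1$ to obtain $\|(H^h)^N u_h\|^2=\lambda_h^{2N}\leq C^{2N}h^{2N}$, so for $h\in(0,1)$ the right-hand side is bounded by $C_N'\,h^{2N}$. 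Isolating one term on the left, for every fixed $\alpha$ with $|\alpha|\leq 2N$ one obtains
\[
\|X_1^{\alpha_1}X_2^{\alpha_2}u_h\|^2 \leq C_N'\,h^{2|\alpha|-2N}.
\]
Choosing $N=|\alpha|/2$ when $|\alpha|$ is even yields the sharp bound $\|X_1^{\alpha_1}X_2^{\alpha_2} u_h\|\leq C_\alpha h^{|\alpha|/2}$ demanded by \eqref{dec}.

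For odd $|\alpha|=2k+1$ the direct choice $N=k+1$ only gives $\|X^{\alpha} u_h\|\leq C h^{k}$, short by half a power of $h$. To recover the sharp exponent I would exploit the antisymmetry $X_i^*=-X_i$, which holds because $A$ is real-valued. Writing $X^\alpha=X_i X^\beta$ with $|\beta|=2k$ even, integration by parts gives
\[
\|X_i X^\beta u_h\|^2 = -\langle X^\beta u_h,\,X_i^2 X^\beta u_h\rangle \leq \|X^\beta u_h\|\cdot\|X_i^2 X^\beta u_h\|.
\]
The operator $X_i^2 X^\beta$ is a product of $2k+2$ vector fields $X_j$ modulo commutator corrections; since $[X_1,X_2]=ihb$ carries one extra factor of $h$, each commutator reduces the $X$-degree by two and introduces one $h$, so every correction scales in $h$ exactly like the leading even-order term. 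Applying the even-order bound already established to each resulting product yields $\|X_i^2 X^\beta u_h\|\leq C h^{k+1}$; combined with $\|X^\beta u_h\|\leq C h^k$, this produces $\|X^\alpha u_h\|^2\leq C h^{2k+1}=Ch^{|\alpha|}$, as desired.

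The main obstacle is precisely this odd-order case: the subelliptic inequality alone is not sharp enough when $|\alpha|$ is odd, since it was derived by an $h^{-1/2}$ rescaling of a Hörmander-type estimate in integer powers. The integration-by-parts interpolation above closes the half-power gap, relying only on the commutation relation $[X_1,X_2]=ihb$, which is morally lower order thanks to its $h$-prefactor. With the even and odd cases both treated, the estimate \eqref{dec} then holds for all $\alpha\in\mathbb N^2$ uniformly in $h\in(0,1)$.
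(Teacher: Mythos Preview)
Your approach is essentially the paper's: plug the eigenfunction into the subelliptic inequality
\[
\sum_{|\alpha|\leq 2N} h^{4N-2|\alpha|}\|X_1^{\alpha_1}X_2^{\alpha_2}u\|^2 \leq C_N\bigl(\|(H^h)^Nu\|^2+h^{4N}\|u\|^2\bigr)
\]
derived just before the proposition. The paper simply writes ``Hence we get'' and states \eqref{dec}, without singling out the odd case; you have been more careful in noticing that taking $|\alpha|=2N$ in this inequality only covers even orders, and in supplying the interpolation step for odd $|\alpha|$. So your proof is correct and fills in a detail the paper leaves implicit.

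One simplification: your commutator discussion is not needed. Since $X^\alpha=X_1^{\alpha_1}X_2^{\alpha_2}$ is already ordered, peel off the \emph{leftmost} factor. If $\alpha_1\geq 1$ take $X_i=X_1$, $X^\beta=X_1^{\alpha_1-1}X_2^{\alpha_2}$, and then $X_i^2X^\beta=X_1^{\alpha_1+1}X_2^{\alpha_2}$ is already in the form $X_1^{\gamma_1}X_2^{\gamma_2}$ with $|\gamma|=|\alpha|+1$ even; if $\alpha_1=0$ take $X_i=X_2$ and likewise $X_i^2X^\beta=X_2^{\alpha_2+1}$. In either case the even-order bound applies directly to both $\|X^\beta u_h\|\leq Ch^{k}$ and $\|X_i^2X^\beta u_h\|\leq Ch^{k+1}$, and your Cauchy--Schwarz step
\[
\|X_iX^\beta u_h\|^2=-\langle X^\beta u_h,\,X_i^2X^\beta u_h\rangle\leq \|X^\beta u_h\|\,\|X_i^2X^\beta u_h\|
\]
closes the half-power gap with no commutator bookkeeping at all.
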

The main contribution to the norm in \eqref{dec} of course comes
from the set $\{(x,y)\in \mathbb R^2 : b(x,y) \leq C\}$. Outside
this set, we have an exponential decay due to Agmon estimates (see
\cite{HM}).
\begin{lemma}
With the same assumptions, if $K$ is a compact such that $K \cap \{(x,y)\in \mathbb R^2 :
b(x,y) \leq C\} =\emptyset\,$, then there exist
$\epsilon=\epsilon_K>0$  and, for any $\alpha=(\alpha_1,\alpha_2)\in
\mathbb N^2$, $C_{K,\alpha}$  and $h_{K,\alpha}$ such that, for
$h\in (0,h_{K,\alpha})$,
$$
\| X_1^{\alpha_1}X_2^{\alpha_2} u_h \|_{L^2(K)}^2 \leq C_{K,\alpha}
e^{- \frac{\epsilon}{h^{1/2}}}\,.
 $$
\end{lemma}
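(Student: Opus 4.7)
The plan is to combine the Agmon-type exponential decay already available for $u_h$ and its first magnetic derivatives (estimate \eqref{e:Agmon}) with the subelliptic maximal estimate \eqref{dec}, applied to a cut-off of $u_h$. Fix $\chi \in C_c^\infty(\mathbb R^2)$ with $\chi \equiv 1$ on a neighborhood $U_1$ of $K$ and with $\mathrm{supp}(\chi) \subset U_2$, where $U_2$ is a slightly larger neighborhood still disjoint from $\{(x,y) : b(x,y)\leq C\}$. Since all derivatives of $\chi$ vanish on $K$, the Leibniz rule gives $X_1^{\alpha_1} X_2^{\alpha_2} u_h = X_1^{\alpha_1} X_2^{\alpha_2}(\chi u_h)$ on $K$, so it suffices to bound $\|X_1^{\alpha_1} X_2^{\alpha_2}(\chi u_h)\|_{L^2(\mathbb R^2)}$.

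On $\mathrm{supp}(\chi)$ one has $b \geq C' > 0$ for some $C'$, so the H\"ormander bracket $[Y_1,Y_2]=b\,\partial_t$ is uniformly nonzero there and the same Helffer-Nourrigat argument as in the preceding proposition applies to $v = \chi u_h$, yielding
\begin{equation*}
\sum_{|\alpha|\leq 2N} h^{4N-2|\alpha|}\, \|X_1^{\alpha_1}X_2^{\alpha_2}(\chi u_h)\|^2 \leq C_N \left(\|(H^h)^N(\chi u_h)\|^2 + h^{4N}\|\chi u_h\|^2\right)
\end{equation*}
for every $N$. The factor $\|\chi u_h\|$ is $O(e^{-\gamma/h^{1/2}})$ directly from \eqref{e:Agmon}. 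For the other term I expand
\begin{equation*}
(H^h)^N(\chi u_h) = \lambda_h^N \chi u_h + [(H^h)^N,\chi]u_h = \lambda_h^N \chi u_h + \sum_{k=0}^{N-1} \lambda_h^{N-1-k}(H^h)^k[H^h,\chi]u_h,
\end{equation*}
using $H^h u_h = \lambda_h u_h$. Each summand $(H^h)^k[H^h,\chi]u_h$ is a differential operator of order $2k+1$ in $X_1,X_2$ with coefficients supported in $\mathrm{supp}(\nabla\chi) \subset U_2 \setminus U_1$, applied to $u_h$.

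The main technical obstacle is that bounding these commutator terms requires control on derivatives of $u_h$ of order up to $2N-1$ on $\mathrm{supp}(\nabla\chi)$, which is precisely the kind of bound we are trying to prove. The resolution is a finite induction on $|\alpha|$: assuming exponential decay of $\|X^\beta u_h\|_{L^2(K')}$ for all $|\beta| < |\alpha|$ and all compact $K'\subset\{b>C\}$, with base case $|\alpha|\leq 1$ given by \eqref{e:Agmon}, one chooses $N$ with $2N\geq |\alpha|$, picks $\chi$ so that $U_1\supset K$, and applies the induction hypothesis on the slightly larger compact $\mathrm{supp}(\nabla\chi)$ to bound each summand by $O(e^{-\epsilon/h^{1/2}})$; the maximal estimate then delivers the same exponential bound for $\|X^\alpha(\chi u_h)\|$, whose restriction to $K$ equals $\|X^\alpha u_h\|_{L^2(K)}$. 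A more elegant alternative is to conjugate the maximal estimate by the Agmon weight $e^{\Phi/h^{1/2}}$, where $\Phi$ is Lipschitz, vanishing on $\{b\leq C\}$ and satisfying $\Phi\geq \epsilon_K$ on $K$: the conjugation introduces perturbations of order $h^{1/2}|\nabla\Phi|$ that can be absorbed in the left hand side for $h$ small, giving the weighted version of the estimate in one shot, from which the lemma follows by the usual weighted Agmon bound on the right hand side.
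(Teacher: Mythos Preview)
The paper itself does not prove this lemma; it simply records it as a consequence of the Agmon estimates from \cite{HM}. Your approach (b) --- conjugating by the weight $e^{\Phi/h^{1/2}}$ and absorbing the resulting $O(h^{1/2}|\nabla\Phi|)$ perturbations into the maximal estimate --- is precisely the standard way to upgrade the weighted $L^2$ Agmon bound to higher-order magnetic derivatives, and is what the citation is pointing to.

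Your approach (a), however, does not close as written. With the expansion
\[
(H^h)^N(\chi u_h)=\lambda_h^N\chi u_h+\sum_{k=0}^{N-1}\lambda_h^{N-1-k}(H^h)^k[H^h,\chi]u_h,
\]
the summand for $k=N-1$ involves $X$-derivatives of $u_h$ of order up to $2N-1$ on $\operatorname{supp}\nabla\chi$, as you say. But when $|\alpha|$ is odd and $N=\lceil|\alpha|/2\rceil$, one has $2N-1=|\alpha|$, so bounding that term requires exactly the estimate under proof; the induction is circular at every odd step. A clean fix is to rewrite the commutator via iterated brackets,
\[
(H^h)^N(\chi u_h)=\sum_{j=0}^N\binom{N}{j}\lambda_h^{N-j}\bigl(\operatorname{ad}_{H^h}^j\chi\bigr)u_h,
\]
and to observe that $\operatorname{ad}_{H^h}^j\chi$ is a differential operator of order $j$ (not $2j-1$) in $X_1,X_2$, with coefficients supported in $\operatorname{supp}\nabla\chi$ for $j\geq 1$. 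The right-hand side then requires only $X$-derivatives of $u_h$ of order $\leq N$ on the larger compact, while the maximal estimate delivers all orders $\leq 2N$; inducting on $N$ now closes with margin to spare.
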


We now follow  \eqref{dec} in the chain of transformations leading
to our normal form (see Subsection~\ref{s:metaplectic}). For the
transformed eigenfunction $\hat u_h$, we obtain that for any
$\alpha=(\alpha_1,\alpha_2)\in \mathbb N^2$ there exists a constant
$C_{\alpha}$ such that, for $h\in (0,1)$
\begin{equation}\label{dec0}
 \|\check X_1^{\alpha_1}\check X_2^{\alpha_2} \hat u_h
\|_{L^2(\mathbb R^2)} \leq C_{\alpha} h^{(\alpha_1+\alpha_2)/2}\,.
\end{equation}

By \eqref{checkX1} and \eqref{checkX2}, it follows that
\[
x= q_{11} \check X_1 + q_{12}  \check X_2 + h q_1\,,\,\quad hD_x =
q_{21} \check X_1 + q_{22}  \check X_2 + h q_2\,,
\]
where $q_{ij}$ and $q_i$ are $h$-pseudodifferential operators. Using
this decomposition, we get from \eqref{dec0} that for any
$k$ and $\ell$ in $\mathbb N$
\[
x^\ell (hD_x)^m \widehat u_h =  \mathcal O (h^{(m+\ell)/2})\,,
\]
in $L^2(\mathbb R^2)$.\\
This is what is needed for controlling the expansions which were
done after the scaling $x=h^{1/2} \tilde x, y=\tilde y$. With this
scaling we get that for any  $k$ and $\ell$ in $\mathbb N$
\begin{equation}\label{control1}
x^\ell \,D_x^m \widetilde u_h = \mathcal O (1)\,.
\end{equation}

\section{The Grushin method}\label{s:Grushin}
In this section, we construct an appropriate Grushin problem
in a neighborhood of the minimum point $(0,0)$ and apply Grushin's
method.

\subsection{Classes of pseudo-differential operators}\label{s:PDO} First, let us
recall a specific class of pseudo-differential operators which
appear to be useful in the analysis of fine spectral properties of
globally elliptic operators. We refer to Helffer \cite{He84} or
Shubin \cite{Sh} for this specific class which is of course
contained in the general class considered  in the Weyl calculus by
H\"ormander \cite{Hor}. The class $S^{m}(\mathbb R^2)$ is defined
as the set of $C^\infty$ function on $\mathbb R^2$ such that for any
natural $k,\ell$, there exists a constant $C_{k,\ell}$ such that
  \[
  |D_x ^kD_\xi^\ell a(x,\xi)|\leq C_{k\ell}(1+|x|+|\xi|)^{m-k-\ell}\,.
  \]
We associate with a symbol $a\in S^{m}(\mathbb R^2)$ an operator via
the Weyl quantization. We denote by $\mbox {Op}\, S^m$ the
corresponding class of operators which are well defined on $\mathcal
S(\mathbb R)$ and $\mathcal S'(\mathbb R)$. When $m=0$, these
operators are continuous in $L^2(\mathbb R)$.  As usual there is a
natural notion of principal symbol and of globally elliptic symbol.
For $m>0$, we say that the symbol $a\in S^m(\mathbb R^2)$ is
elliptic if there exists a constant $C>0$ such that
  $$
  a(x,\xi) \geq \frac 1C ((1+|x|+|\xi|)^{m} - C\,.
  $$

Globally elliptic  operators have parametrices, this means that
there exists a pseudo-differential operator $Q=\operatorname{ Op}
(q)$ in $\operatorname{ Op}\,S^{-m}$ with principal symbol equal to
$\frac{1}{a}$ for $|x|+|\xi|$ large enough such that
  $$
  \operatorname{ Op } (q)  \circ \operatorname{ Op } (a) = I + \mathcal R\,,
  $$
where $\mathcal R$ is regularizing in the sense that it has a
distribution kernel in $\mathcal S(\mathbb R\times \mathbb R)$
(equivalently that  it has a Weyl symbol in $\mathcal S(\mathbb
R^2)$ or that it  can be extended as a map from $\mathcal S'$ into
$\mathcal S$).\\ In addition, if we know by other means that $Q$ is
invertible then the inverse is itself a pseudo-differential operator
in $\operatorname{Op} S^{-m}$  (this is a special (easier) case of
the so-called Beals theorem).

It is also natural to introduce  a class of symbols $S^{0,m}
(\mathbb R^2\times \mathbb R^2)$ of the form
$$
(x,\xi,y,\eta)\in \mathbb R^2\times \mathbb R^2 \mapsto
b(x,\xi,y,\eta)\in \mathbb C,
$$
verifying the following estimates:
\[
|D_{y,\eta}^\alpha D_{x,\xi}^\beta b (x,\xi,y,\eta)| \leq
C_{\alpha,\beta} (1+|x|+|\xi|)^{m-|\beta|}\,, \, \forall (x,\xi)\in
\mathbb R^2\,,\, \forall (y,\eta)\in \mathbb R^2\,.
\]
These symbols could also depend on an additional parameter $h$ and
can be possibly expanded in powers of $h$ (with fixed $m$).\\
With an arbitrary symbol $b\in S^{0,m} (\mathbb R^2\times \mathbb
R^2)$, we can associate by the Weyl quantization a global
pseudodifferential operator $b(x,D_x,y,hD_y)$, which is
semi-classical in the $y$ variable. This operator acts on $\mathcal
S(\mathbb R_x) \widehat \otimes C_0^\infty(\mathbb R_y)$ by the
formula
\begin{multline*}
b(x,D_x,y,hD_y)w(x,y,h):=\\
h^{-1} \int b(\frac{x+x'}{2},\xi, \frac{y+y'}{2},\eta) w(x',y') e^{i
(x-x')\cdot \xi + i \frac{(y-y')\cdot \eta}{h}}\,dy' d\eta dx' d\xi
\,.
\end{multline*}
The class of such operators will be denoted by $\operatorname{Op}
S^{0,m}$.

One can consider an operator in the  class $\operatorname{Op}
S^{0,m}$ as an $h$-pseu\-do\-diffe\-ren\-ti\-al operator on
$C^\infty_0(\mathbb R_y)$ with a vector-valued symbol, taking values
in the space of global pseudodifferential operators on $\mathcal S
(\mathbb R_x)$. The Weyl vector valued $h$-symbol of the operator
$b(x,D_x,y,hD_y)$ is given by
\[
b(y,\eta)w(x)=b(x,D_x,y,\eta)w(x):= \int
b(\frac{x+x'}{2},\xi,y,\eta) w(x') e^{i (x-x')\cdot \xi}\,dx' d\xi
\,.
\]

For two operators $b(x,D_x,y,hD_y)\in \operatorname{Op} S^{0,*}$ and
$c(x,D_x,y,hD_y)\in \operatorname{Op} S^{0,*}$ we will denote by
$b(x,D_x,y,hD_y)\circ c(x,D_x,y,hD_y)$ their composition as
operators on $\mathcal S (\mathbb R_x) \hat\otimes
C^\infty_0(\mathbb R_{y})$ and by $b(y,\eta) c(y,\eta)$ the
(pointwise at $(y,\eta)$) composition of their Weyl vector valued
$h$-symbols $b(y,\eta)$ and $c(y,\eta)$ as global pseudodifferential
operators on $\mathcal S (\mathbb R_x)$.

We introduce the class $\operatorname{Op} S^{0,m}[h]$, which
consists of families $\{C(h) : h>0\}$ of bounded operators on
$\mathcal S (\mathbb R_x) \hat\otimes L^2(\mathbb R_y)$, which can
be represented as an asymptotic sum of the following type:
\[
C(h)\sim \sum_{j\geq 0} h^{\frac j2}c_j(x,D_x,y,hD_y)
\]
where each $c_j$ belongs to $S^{0,m_j} (\mathbb R^2\times\mathbb
R^2)\,$ with some $m_j$ and can be represented as a finite sum
\[
c_j(x,D_x,y,hD_y)=\sum b^{(j)}_\ell (y,hD_y) a^{(j)}_\ell (x,D_x)
\]
with some $b^{(j)}_\ell\in S(1)$ and $a^{(j)}_\ell\in
S^{m_j}(\mathbb R)$.

The asymptotic sum means that for any $N\in \mathbb N$ the remainder
\[
R_N(h)=C(h)- \sum_{j=0}^N h^{\frac j2}   c_j^w (x,D_x,y,hD_y)
\]
has the property that there exists $k(N)\in \mathbb N$ and $h_0>0$
such that, for any $h\in (0,h_0]$ and for any $s>0$, we have
\begin{equation}\label{e:RNh}
\|R_N(h) : B^{s+k(N)}(\mathbb R_x)\hat \otimes L^2(\mathbb R_y)\to
B^{s}(\mathbb R_x)\hat\otimes L^2(\mathbb R_y) \| \leq C_s
h^\frac{N+1}{2}\,.
\end{equation}

We also consider formal pseudodifferential operators of class
$\operatorname{Op} S^{0,*}[h]$, which are formal sums of the
following type:
\[
C(h)= \sum_{j\geq 0} h^{\frac j2}c_j(x,D_x,y,hD_y)\,,
\]
where each $c_j$ belongs to $S^{0,m_j} (\mathbb R^2\times\mathbb
R^2)\,$ with some $m_j$.

By Proposition~\ref{propA1}, it follows that, for a semiclassical
symbol $a$ on $\mathbb R^2$, the operator $a(h^{1/2}x+y,
hD_y-h^{1/2} D_x,h)$ belongs to $\operatorname{Op} S^{0,*}[h]$. By
\eqref{widetildeT}, this implies that the operator $\widetilde T^h$
belongs to $\operatorname{Op} S^{0,2}[h]$.

\begin{defn}
Let $\Omega$ be an open subset of ${\mathbb R}^2$. For an operator
$C\in \operatorname{Op} S^{0,*}[h]$, we say that $C=\mathcal
O_\Omega(h^{k/2})$ with some $k\in \mathbb N$ if, for
$j=0,\ldots,k-1$,
\[
c_j(x,D_x,y,\eta)=0, \quad \forall (y,\eta)\in \Omega.
\]
\end{defn}

Using the fact that the composition of the semiclassical
symbols is a local operation, we easily get that, if $A\in
\operatorname{Op} S^{0,*}[h]$ and $B=\mathcal O_\Omega(h^{k/2})$,
then $A\circ B=\mathcal O_\Omega(h^{k/2})$ and $B\circ A=\mathcal
O_\Omega(h^{k/2})$.

\subsection{Initialization}
We will use the variables $(x,y)$ introduced in
Section~\ref{s:dilation}. Our Grushin problem takes the form
 \begin{equation}\label{e:Grushin}
   \mathcal P_h (z) = \left(
   \begin{array}{cc}
   \widetilde  T^h  - b_0 - z & R_-
  \\
  R_+&0
  \end{array}
  \right)\,,
  \end{equation}
where $\widetilde T^h$ was introduced in
\eqref{widetildeT}-\eqref{hatTh1}, the operator $R_- : \mathcal
S({\mathbb R})\to \mathcal S({\mathbb R}^2)$ is given by
\begin{equation}\label{defR-}
  R_- f  (x,y) = h_0(x) f(y)\,,
\end{equation}
with $h_0(x)=\pi^{-\frac14}b_0^{-1/2} e^{-{b_0^{-1}}x^2/2}$ being
the normalized first eigenfunction of the harmonic oscillator
  $$ T=b_0^2  D_x^2 + x^2\,,$$
and the operator $R_+ : \mathcal S({\mathbb R}^2)\to \mathcal
S({\mathbb R})$ is given by
\begin{equation} \label{defR+}
  R_+ \phi (y) = \int h_0(x) \phi(x,y) dx\,.
\end{equation}

Note that $R_-$ and $R_+$ have a very simple structure, which
simplifies the analysis. But the counterpart is that, since our
considerations are perturbative near the bottom, we are obliged to
choose $\gamma_0$ to be small enough.

\subsection{Towards an inverse}
First, we will work at the level of symbols in $(y,\eta)$. We
subtract $b_0$ from $\sigma_0$ and introduce
\[
\Theta_0 (x,D_x,y,\eta) := \sigma_0(x,D_x,y,\eta) - b_0\,.
\]
We now look at the Grushin problem
\begin{equation}\label{e:Grushin0}
   \mathcal Q(x,D_x,y,\eta,z) : =
   \begin{pmatrix}
   \Theta_0(x,D_x,y,\eta) - z & R_-
  \\
  R_+&0
\end{pmatrix}.
\end{equation}

{\bf We first look at the invertibility for $(y,\eta)=(0,0)$.}\\
Put
\[
T=\sigma_0(0,0)=b_0^2 D_x^2 +x^2,
\]
which is the value of the Weyl  vector valued $h$-symbol
$\sigma_0(y,\eta)$ of $S_0$ at $(0,0)$ (see \eqref{S0} and
\eqref{sigma0}).

Consider the operator
$$
 \mathcal P ^0:= \mathcal Q(x,D_x,0,0,0)= \left(
   \begin{array}{cc}
  T-b_0 & R_-
  \\
  R_+&0
  \end{array}
  \right)
$$
on $L^2(\mathbb R)\times \mathbb C$. Its left inverse as an operator
on $L^2(\mathbb R)\times \mathbb C$ has the form
\begin{equation}\label{initgrus}
  \mathcal E^0 = \left(
   \begin{array}{cc}
   U_0 & R_-
  \\
  R_+&0
  \end{array}
  \right),
  \end{equation}
where
  \begin{equation}\label{defu0}
  U_0 \mbox{  is the regularized inverse of the harmonic oscillator } T-b_0\,.
  \end{equation}

\begin{lemma}\label{u0pseudo}
The operator $U_0$ introduced in \eqref{defu0} is a
pseudodifferential operator with symbol in $S^{-2}(\mathbb R^2)$.
\end{lemma}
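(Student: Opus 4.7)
The plan is to reduce to the Beals-type invertibility theorem recalled at the end of Subsection~\ref{s:PDO}. Recall that $T=b_0^2 D_x^2 + x^2$ is a globally elliptic operator in $\operatorname{Op} S^2(\mathbb R^2)$ with principal symbol $b_0^2 \xi^2 + x^2$, and that its spectrum is $\{(2k+1)b_0:k\in\mathbb N\}$, with $h_0$ spanning the one-dimensional eigenspace for $b_0$. So $T-b_0$ itself is not invertible, which is why a \emph{regularized} inverse must be considered; the idea is to perturb $T-b_0$ by a well-chosen regularizing rank-one term to restore invertibility, apply Beals, and then subtract the correction.

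Concretely, I would introduce the rank-one orthogonal projector $P_0 := R_- R_+$ onto $h_0$, whose distribution kernel $h_0(x)h_0(x')$ is a Gaussian lying in $\mathcal S(\mathbb R\times\mathbb R)$; hence $P_0$ is regularizing, in particular $P_0\in\operatorname{Op} S^{-\infty}(\mathbb R^2)$. Set
\[
A := T - b_0 + P_0.
\]
Since adding $P_0$ does not change the principal symbol, $A$ still belongs to $\operatorname{Op} S^2(\mathbb R^2)$ and is globally elliptic in the sense of Subsection~\ref{s:PDO}. Moreover $A$ is invertible on $L^2(\mathbb R)$: on $h_0^\perp$ it coincides with $T-b_0$, whose spectrum $\{2kb_0:k\geq 1\}$ is bounded away from $0$, while on $\mathbb C h_0$ it acts as the identity. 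The Beals-type result quoted in Subsection~\ref{s:PDO} therefore yields $A^{-1}\in\operatorname{Op} S^{-2}(\mathbb R^2)$.

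It remains to identify $U_0$ with an expression manifestly in $\operatorname{Op} S^{-2}$. From the spectral description above one checks at once that
\[
U_0 = A^{-1}(I-P_0) = A^{-1} - A^{-1}P_0,
\]
since $I-P_0$ is the identity on $h_0^\perp$ and zero on $\mathbb C h_0$, while $A^{-1}$ restricts to $(T-b_0)^{-1}$ on $h_0^\perp$. The second term $A^{-1}P_0$ lies in $\operatorname{Op} S^{-\infty}$ because $P_0$ does, so we conclude
\[
U_0 \in \operatorname{Op} S^{-2}(\mathbb R^2).
\]

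The only real obstacle is the non-invertibility of $T-b_0$; once it is bypassed by the compact, smoothing perturbation $+P_0$ (admissible precisely because $P_0$ has a Schwartz kernel), the rest is a direct application of the Beals-type theorem already cited in Subsection~\ref{s:PDO}, and the final identification of $U_0$ is elementary spectral algebra.
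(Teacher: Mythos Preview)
Your proof is correct and essentially identical to the paper's: both add the rank-one projector onto $h_0$ to $T-b_0$ to obtain an invertible, globally elliptic order-$2$ operator, invoke the Beals-type theorem from Subsection~\ref{s:PDO} to get an inverse in $\operatorname{Op}S^{-2}$, and then recover $U_0$ by composing with $I-P_0$. The only cosmetic difference is that the paper writes $U_0=(I-\Pi_0)U_1$ while you write $U_0=A^{-1}(I-P_0)$, but these agree since $A^{-1}$ and $P_0$ commute.
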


\begin{proof} The projector $\Pi_0$ on the first eigenspace is
a pseudodifferential operator of order $0$ with symbol in $\mathcal
S(\mathbb R^2)$. Let us look at $T-b_0 + \Pi_0$. This is a globally
elliptic pseudodifferential operator of order $2$, which is
invertible. Hence, its inverse $U_1$ is a pseudodifferential
operator in $\operatorname{Op}S^{-2}$. It is then enough to observe
that $U_0= (I-\Pi_0) U_1$ which is also a pseudodifferential
operator with the same principal symbol.
\end{proof}

Observe the identities:
\begin{gather*}
U_0\,  \Theta_0 (0,0) + R_- R_+ = I\,,\,\quad U_0 R_-
=0\,,\, \\
R_+R_- = 1\,,\,\quad R_+ \Theta_0 (0,0)=0 \,.
  \end{gather*}

\subsection{Grushin's problem: step 2} \label{s:step2}
Now starting from our inverse of\\ $\mathcal Q(x,D_x,y,\eta,z)$ for
$(y,\eta)= (0,0)$ and $z=0$ constructed explicitly in
\eqref{initgrus}, we will construct the inverse for $(y,\eta,z)$ in
a neighborhood of $(0,0,0)$.

We can first consider
\begin{equation}\label{rem1}
\begin{split}
    {\mathcal E^0}\mathcal Q(x,D_x,y,\eta,z) & = I +  \left(
   \begin{array}{cc}
  U_0(\Theta_0(y,\eta)-\Theta_0(0,0))- z U_0 & 0
  \\
  R_+(\Theta_0(y,\eta)-\Theta_0(0,0)-z)&0
  \end{array}
  \right)\\
  & = \left(
   \begin{array}{cc}
 I+  U_0(\Theta_0(y,\eta)-\Theta_0(0,0))- z U_0 & 0
  \\
  R_+(\Theta_0(y,\eta)-\Theta_0(0,0)-z)&1
  \end{array}
  \right).
  \end{split}
  \end{equation}
(Note that $R_+\Theta_0(0,0) =0$ but we prefer to keep the
expression $\Theta_0(y,\eta)-\Theta_0(0,0)$ in the
formula).\\
The operator on the right hand side  is invertible  as an operator
in $\mathcal L (L^2(\mathbb R)\times \mathbb C)$. \\ It has the form
  $$
  \left(
   \begin{array}{cc}
 A & 0
  \\
  b &1
  \end{array}
  \right)\,,
  $$
where $A$ is a global pseudodifferential operator of degree $0$ on
$\mathcal S(\mathbb R)$
and $b : L^2(\mathbb R)\to \mathbb C$ is given by $u\mapsto \langle u, b\rangle$.\\
It is invertible if $A$ is invertible and then the left inverse
reads:
  $$
  \left(
   \begin{array}{cc}
 A ^{-1} & 0
  \\
 - b A^{-1} &1
  \end{array}
  \right)\,.
  $$

We can be more explicit by using  the pseudodifferential calculus.
For fixed $(y,\eta,z)$,
\[
A(y,\eta,z)= I + U_0(\Theta_0(y,\eta)-\Theta_0(0,0))- z U_0
\]
is a pseudodifferential operator  of order $0$ with $C^\infty$
coefficients with respect to
$y,\eta,z$.\\
It can be shown (see \cite[Section 4]{BGH}, which treats a much more
complicate case) that there exists an open neighborhood
$\Omega_1\subset \mathbb R^2$ of $(0,0)$ and $\alpha_0>0$ such that,
for any $(y, \eta)\in \Omega_1$ and $z\in \mathbb C$ such that
$|z|<\alpha_0$ the operator $A(y,\eta,z)$ is invertible as an
operator in $L^2(\mathbb R)$ and its inverse
\begin{equation}\label{e:W0}
W_0(y,\eta,z)=(I + U_0 (\Theta_0(y,\eta)-\Theta_0(0,0))- z
U_0)^{-1}\,,
\end{equation}
is also a pseudodifferential operator of order $0$ whose symbol
depends smoothly on $(y,\eta,z)$. We note that
\[
W_0(0,0,z)= (I - z U_0)^{-1}\,,
\]
and that  the left inverse of the system in the right hand side of
\eqref{rem1} takes the form
     \begin{multline*}
    \left( I +  \left(
   \begin{array}{cc}
  U_0(\Theta_0(y,\eta)-\Theta_0(0,0))- z U_0 & 0
  \\
  R_+(\Theta_0(y,\eta)-\Theta_0(0,0)-z)&0
  \end{array}
  \right)\right)^{-1}\\ = \left(
   \begin{array}{cc}
  W_0(y,\eta,z)& 0
  \\
-   R_+(\Theta_0(y,\eta)-\Theta_0(0,0)-z) W_0(y,\eta,z) &1
  \end{array}
  \right)\,.
  \end{multline*}
We can now compose this inverse with ${\mathcal E^0}$ and obtain the
inverse of the operator $\mathcal Q(x,D_x,y,\eta,z)$ for any $(y,
\eta)\in \Omega_1$ and $z\in \mathbb C$ such that $|z|<\alpha_0$ in
the block form
\begin{align*}
\mathcal E^0(x,D_x,y,\eta,z):=&\mathcal Q(x,D_x,y,\eta,z)^{-1}\\ = &
\left(
   \begin{array}{cc}
  W_0(y,\eta,z)& 0
  \\
-   R_+(\Theta_0(y,\eta)-\Theta_0(0,0)-z) W_0(y,\eta,z) &1
  \end{array}
  \right) \circ {\mathcal E_0}\\
 = & \left(
   \begin{array}{cc}
  \epsilon^0& \epsilon^0_-
  \\
\epsilon^0_+&\epsilon^0_{\pm}
  \end{array}
  \right) \,.
  \end{align*}

We observe that the first term
\begin{equation}\label{e:epsilon0}
\epsilon^0(y,\eta,z)=W_0(y,\eta,z)\, U_0
\end{equation}
is a global pseudo-differential operator of order $-2$, whose symbol
depends smoothly on $(y,\eta,z)$. \\
The second term
\begin{equation}\label{e:epsilon0-}
\epsilon^0_-(y,\eta,z)=W_0(y,\eta,z)\,R_-
\end{equation}
is an Hermite operator from $\mathcal S({\mathbb R})$ to $\mathcal
S({\mathbb R}^2)$ of the form $\Psi_- R_-$ with some $\Psi_-\in
S^{0,*}$.\\
The third term
\[
\epsilon^0_+(y,\eta,z) =R_+(1-
(\Theta_0(y,\eta)-\Theta_0(0,0)-z)W_0(y,\eta,z) U_0)
\]
is an operator from $\mathcal S({\mathbb R}^2)$ to $\mathcal
S({\mathbb R})$ of the form $R_+ \Psi_+$ with some $\Psi_+\in
S^{0,*}$. We note that for $u\in L^2(\mathbb R)$ we have:
\[
R_+ \Psi_+u= \langle u\,,\,\Psi_+^* h_0(x)\rangle_{L^2(\mathbb
R)}\,.
\]

Finally, the fourth term of the matrix is a (scalar) $C^\infty$
function of $(y,\eta,z)$:
\begin{equation}\label{e:epsilon0pm}
 \epsilon^0_{\pm} (y,\eta,z)=  -  R_+ (\Theta_0(y,\eta)-\Theta_0(0,0)-z) W_0 (y,\eta,z) R_-
\end{equation}
 which   for $(y,\eta) = (0,0)$ is equal to
 $$
\epsilon^0_{\pm} (0,0,z)= - z  R_+W_0 (0,0,z)R_-= - z R_+ (I-
zU_0)^{-1} R_- = -z.
 $$
We can write it in the form:
\begin{equation}\label{e:epsilon0pm1}
  \epsilon^0_{\pm} (y,\eta,z)=  \langle (\Theta_0(y,\eta)-\Theta_0(0,0)-z)
  W_0 (y,\eta,z) h_0\,,\, h_0\rangle_{L^2(\mathbb R)}\,.
\end{equation}

\begin{rem}\label{lemma3.2} Using the fact that $(0,0)$ is a critical point of $\hat b$,
one can show, by direct computations, that
\[
\frac{\partial \epsilon^0_{\pm}}{\partial y} (0,0,z)= \frac{\partial
\epsilon^0_{\pm}}{\partial \eta } (0,0,z)=0
\]
and
\[
\frac{\partial^2 \epsilon^0_{\pm}}{\partial y^2}
(0,0,0)=\frac{\partial^2\hat b}{\partial y^2} (0,0)\,,\quad
\frac{\partial^2\epsilon^0_{\pm}}{\partial y\partial \eta
}(0,0,0)=\frac{\partial^2\hat b}{\partial y\partial \eta }(0,0)\,,
\]
\[
\frac{\partial^2 \epsilon^0_{\pm}}{\partial \eta^2}
(0,0,0)=\frac{\partial^2 \hat b}{\partial \eta^2}(0,0)\,.
\]
\end{rem}

\begin{rem}
Note that in this subsection, we have to choose $\gamma_0$ small
enough in order to stay in a sufficiently small neighborhood of
$(0,0)$.
\end{rem}

\subsection{From one Grushin problem to another}\label{ss5}
In \cite{He77}, there is a computation of the symbol of
$\epsilon^0_\pm$. In particular it is proven that one can compute
$\epsilon^0_\pm$ for a suitable Grushin problem. The point is that
the Grushin problem considered in \cite{He77} is not the same as
above (see \eqref{e:Grushin0}). In \cite{He77}, the Grushin problem
depends on $(y,\eta)$ in the sense that one uses the eigenfunction
of the harmonic oscillator $L=\Theta_0(y,\eta) - z$ corresponding to
the eigenvalue $\hat b(y,\eta)-b_0 - z$. Hence it is necessary to
control the link between two different Grushin problems. We will use
the index $0$ for the Grushin problem defined in \eqref{e:Grushin0}
and the index $1$ for another Grushin  problem.

Thus, consider two Grushin problems
\[
\mathcal Q^0(y,\eta,z)=\begin{pmatrix} L & R^0_-
\\
R^0_+&0
\end{pmatrix}, \quad \mathcal Q^1(y,\eta,z)=\begin{pmatrix} L & R^1_-
\\
R^1_+&0
\end{pmatrix}.
\]
Let $\mathcal E^0(y,\eta,z)$ and $\mathcal E^1(y,\eta,z)$ be the
inverses of $\mathcal Q^0(y,\eta,z)$ and $\mathcal Q^1(y,\eta,z)$
respectively:
\[
\mathcal E^0=\begin{pmatrix} \epsilon^0& \epsilon^0_-\\
\epsilon^0_+&\epsilon^0_\pm\end{pmatrix}, \quad \mathcal E^1=\begin{pmatrix} \epsilon^1& \epsilon^1_-\\
\epsilon^1_+&\epsilon^1_\pm\end{pmatrix}.
\]

A relation between $\epsilon^1_\pm(y,\eta,z)$ and
$\epsilon^0_\pm(y,\eta,z)$ is given by the following lemma.

\begin{lemma}
If $R^1_+(0,0) = R^0_+(0,0)$ and $R^1_-(0,0) = R^0_-(0,0)$,  there
exists $q(y,\eta,z)$ elliptic for  $(y,\eta,z)$ close to $(0,0,0)$
such that
\begin{equation}\label{gp1}
\epsilon^0_\pm(y,\eta,z) = q(y,\eta,z) \epsilon^1_\pm(y,\eta,z) \,,
\end{equation}
and
$$
q(0,0,z) =1\,.
$$
\end{lemma}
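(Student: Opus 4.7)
The plan is to relate $\mathcal{E}^0$ and $\mathcal{E}^1$ multiplicatively, exploiting that the two Grushin problems share the same upper-left block $L$. I would first write $\mathcal{Q}^0 = \mathcal{Q}^1 + \Delta$ with the purely off-diagonal perturbation
\[
\Delta(y,\eta) = \begin{pmatrix} 0 & \Delta_- \\ \Delta_+ & 0 \end{pmatrix}, \quad \Delta_\pm := R^0_\pm - R^1_\pm.
\]
Multiplying by $\mathcal{E}^1$ on the left yields the key identity
\[
\mathcal{E}^1 \mathcal{Q}^0 = I + \mathcal{E}^1 \Delta =: M(y,\eta,z),
\]
an explicit $2\times 2$ operator matrix with entries
\[
M_{11} = I + \epsilon^1_- \Delta_+, \ M_{12} = \epsilon^1 \Delta_-, \ M_{21} = \epsilon^1_\pm \Delta_+, \ M_{22} = 1 + \epsilon^1_+ \Delta_-.
\]
The hypothesis $\Delta_\pm(0,0) = 0$ gives $M(0,0,z) = I$, so $M$ is invertible on a neighborhood of $(0,0,0)$ (the inversion being legitimate within the class $\operatorname{Op} S^{0,*}$ set up in Subsection~\ref{s:PDO}), and by uniqueness of the inverse one then obtains $\mathcal{E}^0 = M^{-1} \mathcal{E}^1$.

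Next, I would extract the $(2,2)$ entry of this identity via the block-inverse formula. Setting $S := M_{22} - M_{21} M_{11}^{-1} M_{12}$, one has $(M^{-1})_{22} = S^{-1}$ and $(M^{-1})_{21} = -S^{-1} M_{21} M_{11}^{-1}$, whence
\[
\epsilon^0_\pm = (M^{-1})_{21}\, \epsilon^1_- + (M^{-1})_{22}\, \epsilon^1_\pm = S^{-1}\bigl(\epsilon^1_\pm - M_{21} M_{11}^{-1} \epsilon^1_-\bigr).
\]
Substituting $M_{21} = \epsilon^1_\pm \Delta_+$ and using that $\epsilon^1_\pm$ and $\Delta_+ M_{11}^{-1} \epsilon^1_-$ are both scalars (hence commute), I can factor $\epsilon^1_\pm$ on the right:
\[
\epsilon^0_\pm = S^{-1}\bigl(1 - \Delta_+ M_{11}^{-1} \epsilon^1_-\bigr)\, \epsilon^1_\pm =: q(y,\eta,z)\, \epsilon^1_\pm.
\]

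At $(0,0,z)$ both $\Delta_\pm$ vanish, so $S(0,0,z) = M_{22}(0,0,z) = 1$ and the parenthesised factor equals $1$, giving $q(0,0,z) = 1$. By smoothness of $q$ in $(y,\eta,z)$, this value persists up to a small error on a neighborhood of $(0,0,0)$, so $q$ is non-vanishing (elliptic) there. The only real technical point, rather than a genuine obstacle, is checking that the Schur-complement inversion of $M$ stays within the operator calculus of Subsection~\ref{s:PDO}; this is exactly what the class $\operatorname{Op} S^{0,*}$ is designed to accommodate, using the smallness of $\Delta_\pm$ near the origin together with the Neumann series construction of $M^{-1}$.
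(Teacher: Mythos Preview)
Your proof is correct and rests on the same idea as the paper's: relate the two inverses through the off-diagonal perturbation $\mathcal Q^0-\mathcal Q^1$ and read off the $(2,2)$ entry. The paper takes a slightly shorter route. Rather than forming $M=\mathcal E^1\mathcal Q^0$ and inverting it by Schur complement, it writes the resolvent-type identity
\[
\mathcal E^0=\mathcal E^1+\mathcal E^0(\mathcal Q^1-\mathcal Q^0)\mathcal E^1
\]
(obtained by computing $\mathcal E^0\mathcal Q^1\mathcal E^1$ two ways), extracts the $(2,2)$ entry
\[
\epsilon^0_\pm=\epsilon^1_\pm+\epsilon^0_\pm(R^1_+-R^0_+)\epsilon^1_-+\epsilon^0_+(R^1_--R^0_-)\epsilon^1_\pm,
\]
and rearranges to obtain $q=\bigl(1+\epsilon^0_+(R^1_--R^0_-)\bigr)\bigl(1-(R^1_+-R^0_+)\epsilon^1_-\bigr)^{-1}$ directly, with only two scalar inversions and no block-matrix algebra. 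Your expression for $q$ has the minor advantage of being written purely in terms of $\mathcal E^1$-data, while the paper's mixes $\epsilon^0_+$ and $\epsilon^1_-$; for the application either form suffices.
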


\begin{proof}
To simplify  notation, we omit  the reference to $(y,\eta,z)$.
Computing $\mathcal E^0\mathcal Q^1\mathcal E^1$ in two different
ways, we get the identity
\[
\mathcal E^0=\mathcal E^1+\mathcal E^0(\mathcal Q^1-\mathcal
Q^0)\mathcal E^1.
\]
For the lower right entry in this matrix identity, we get
$$
\epsilon^0_\pm = \epsilon^1_\pm + \epsilon^0_\pm (R^1_+ -R^0_+)
\epsilon^1_-  + \epsilon^0_+(R^1_--R^0_-) \epsilon^1_\pm\,.
$$
Since $R^1_-(0,0) = R^0_-(0,0)$, the operator $1+
\epsilon^0_+(R^1_--R^0_-)$ is invertible for $(y,\eta)$ close to
$(0,0)$, and we obtain that
\begin{equation}\label{gp2}
(1+ \epsilon^0_+(R^1_--R^0_-))^{-1} \epsilon^0_\pm (1- (R^1_+
-R^0_+) \epsilon^1_- )  = \epsilon^1_\pm \,,
\end{equation}
that immediately completes the proof.
\end{proof}

The consequence of this lemma is that if we find easier to compute
the symbol of $\epsilon^1_\pm$ we will get the symbol of
$\epsilon^0_\pm$ up to the multiplication by an elliptic symbol.  \\
We took as $R_-^1(y,\eta)$ the operator from $\mathbb C$ into
$L^2(\mathbb R_x)$:
\begin{equation}\label{defR-yeta}
\mathbb C \ni \lambda \mapsto  R_-^1 (y,\eta)  \lambda  = \lambda
h_{y,\eta}(\cdot ) \,,
\end{equation}
$h_{y,\eta}$ being the normalized first eigenfunction of
$\sigma_0(x,D_x,y,\eta)$ associated with the eigenvalue $\hat
b(y,\eta)$ (see \eqref{defhyeta}). We took as $R_+^1(y,\eta)$ the
adjoint of $R_-^1(y,\eta)$\,.

For the inverse of $\mathcal Q^1$, a direct computation gives
\[
\mathcal E^1(y,\eta,z)=
\left(\begin{array}{cc} \epsilon^1(y,\eta,z) &  R_-^1 (y,\eta) \\
R_+^1 (y,\eta)   &  \hat b(y,\eta)-b_0-z
\end{array}
\right)\,,
\]
where $\epsilon^1(y,\eta,z)$ is the inverse of  $L= \Theta_0(y,\eta)
-z$  when restricted to the orthogonal of $h_{y,\eta}$ and $0$ on
$h_{y,\eta}$.

\begin{rem} Note here that a natural condition on $z$ is that
\begin{equation}\label{condz}
z < 2 \hat b(y,\eta)\,,
\end{equation}
in order to avoid the second eigenvalue $2 \hat b(y,\eta)$ of  $\sigma_0(x,D_x,y,\eta)-\hat b(y,\eta)$.
\end{rem}
 Hence we finally get:
\begin{prop}\label{p:E0}
There exists $q_0(y,\eta,z)$ elliptic for  $(y,\eta,z)$ close to $(0,0,0)$ such that
\begin{equation}\label{gp3}
\epsilon^0_\pm(y,\eta,z) = q_0(y,\eta,z) (\hat b(y,\eta)-b_0-z) \,,
\end{equation}
and
$$
q_0(0,0,0) =1\,.
$$
\end{prop}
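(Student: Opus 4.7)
The plan is to obtain Proposition~\ref{p:E0} as a direct application of the preceding lemma to the pair $(\mathcal Q^0,\mathcal Q^1)$, where $\mathcal Q^1$ is the auxiliary Grushin problem built from the point-wise ground state $h_{y,\eta}$ of $\sigma_0(x,D_x,y,\eta)$ via \eqref{defR-yeta}. With that set-up, the proof factors into two essentially independent pieces: a verification that the hypotheses of the lemma are satisfied, and an explicit computation of $\epsilon^1_\pm$.

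First, I would verify that $R^1_\pm(0,0) = R^0_\pm$. At $(y,\eta)=(0,0)$ the symbol $\sigma_0$ reduces to the harmonic oscillator $T=b_0^2 D_x^2 + x^2$ of \eqref{defu0}, whose $L^2$-normalized ground state is precisely $h_0$, with eigenvalue $b_0$. After fixing the phase of $h_{y,\eta}$ to be continuous and positive at $(0,0)$, one has $h_{0,0}=h_0$, so $R^1_-(0,0)\lambda = \lambda h_0 = R^0_-\lambda$ and, by taking adjoints, $R^1_+(0,0) = R^0_+$. The lemma then yields $q(y,\eta,z)$ elliptic near $(0,0,0)$ with $q(0,0,z)\equiv 1$ and $\epsilon^0_\pm = q\,\epsilon^1_\pm$.

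Second, I would read off $\epsilon^1_\pm$ from the explicit block inversion of $\mathcal Q^1$. Since $h_{y,\eta}$ spans the ground-state eigenspace of $\Theta_0(y,\eta) = \sigma_0(y,\eta) - b_0$ with eigenvalue $\hat b(y,\eta)-b_0$, the operator $L = \Theta_0(y,\eta)-z$ acts as multiplication by $\hat b(y,\eta)-b_0-z$ on the line $\mathbb C h_{y,\eta}$. The spectral gap coming from \eqref{LL} (the next eigenvalue $3\hat b(y,\eta)-b_0$ is separated by $2\hat b(y,\eta)\geq 2b_0>0$) makes $L$ invertible on $(\mathbb C h_{y,\eta})^\perp$ once $|z|$ is small. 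A direct $2\times 2$ block inversion of $\mathcal Q^1$ in the decomposition $L^2(\mathbb R_x) = \mathbb C h_{y,\eta}\oplus (\mathbb C h_{y,\eta})^\perp$ then reproduces the explicit formula for $\mathcal E^1$ recorded just above Proposition~\ref{p:E0}, whose $(2,2)$-entry is exactly $\epsilon^1_\pm(y,\eta,z) = \hat b(y,\eta)-b_0-z$.

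Combining the two steps gives $\epsilon^0_\pm(y,\eta,z) = q(y,\eta,z)\,(\hat b(y,\eta)-b_0-z)$ with $q(0,0,0)=1$; setting $q_0:=q$ and using smoothness of $q$ together with $q(0,0,0)=1$ supplies ellipticity on a full neighborhood of $(0,0,0)$. The main obstacle I anticipate is the smooth dependence in $(y,\eta,z)$ of both the rank-one projector onto $\mathbb C h_{y,\eta}$ and the regularized inverse of $L$ on $(\mathbb C h_{y,\eta})^\perp$. This is standard pseudodifferential/spectral-projector calculus once the spectral gap of $\sigma_0(y,\eta)$ is in place, and it is precisely what forces the constraint \eqref{condz} together with the smallness of $\gamma_0$ already fixed in Subsection~\ref{s:step2}.
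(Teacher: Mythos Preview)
Your proposal is correct and follows essentially the same route as the paper: apply the comparison lemma of Subsection~\ref{ss5} to the pair $(\mathcal Q^0,\mathcal Q^1)$ with $R^1_\pm$ built from $h_{y,\eta}$, check $R^1_\pm(0,0)=R^0_\pm$ via $h_{0,0}=h_0$, and read off $\epsilon^1_\pm(y,\eta,z)=\hat b(y,\eta)-b_0-z$ from the explicit block inverse $\mathcal E^1$. The paper's presentation is more telegraphic (it simply records $\mathcal E^1$ and says ``Hence we finally get''), but the content is the same.
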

In particular, we recover in another way the statements of Remark
\ref{lemma3.2}.

 \subsection{Grushin's problem: final step}\label{ss6}
 In this subsection, we will complete our study of Grushin's
problem $\mathcal P_h(z)$ given by \eqref{e:Grushin}, considering it
at the level of operators.

First, we introduce an appropriate algebra of operators. Consider
the space $\mathfrak A$ of operators on $(\mathcal S(\mathbb
R_x)\widehat \otimes C_c^\infty(\mathbb R_y)) \times
C_c^\infty(\mathbb R_y)$ of the form
\[
A(h)=\begin{pmatrix} a_0(x,D_x,y, hD_y,h) & a_-(x,D_x,y, hD_y,h)R_-\\
R_+a_+(x,D_x,y, hD_y,h) & a_\pm(y, hD_y,h),
\end{pmatrix},
\]
where $a_0,a_-,a_+\in \operatorname{Op} S^{0,*}[h]$ and $a_\pm\in
\operatorname{Op} S^{*}[h]\,$.

Here $R_+a_+(x,D_x,y, hD_y,h)$ can be defined in the following way:
$$
\mathcal S(\mathbb R_x)\widehat \otimes C_0^\infty(\mathbb R_y)  \ni
u\mapsto R_+ \, a_+(x,D_x,y, hD_y,h) u \in C^\infty (\mathbb R_y)\,,
$$
with the property that, $ \forall v \in C_0^\infty(\mathbb R_y)$,
$$
\langle R_+ \, a_+(x,D_x,y, hD_y,h) u,v\rangle = \langle
a_+(x,D_x,y, hD_y,h) u\,,\, h_0\otimes v\rangle_{L^2(\mathbb
R^2_{xy})}\,.
$$
Observe that, for any $a\in S^*$, we have
\[
(I\otimes a(y, hD_y,h)) R_-= R_-a(y, hD_y,h),
\]
and
\[
R_+(I\otimes a(y, hD_y,h))= a(y, hD_y,h)R_+\,.
\]
We also have
\[
 R_+ \theta(x,D_x,y,hD_y,h) R_- = \phi (y,hD_y,h)\,.
\]
One can see that $\mathfrak A$ is an algebra. For $A,B\in \mathfrak
A$ we have
\[
A\circ B=\begin{pmatrix} a_0\circ
b_0+a_-\circ R_-R_+\circ b_+ & (a_0\circ b_-+a_-\circ (I\otimes b_\pm))R_-\\
R_+(a_+\circ b_0+(I\otimes a_\pm)\circ b_+) & a_\pm\circ
b_\pm+R_+a_+\circ b_-R_-\,,
\end{pmatrix}.
\]
For any $A\in \mathfrak A$ of the form
\[
A(h)=\begin{pmatrix} a_0(x,D_x,y, hD_y,h) & a_-(x,D_x,y, hD_y,h)R_-\\
R_+a_+(x,D_x,y, hD_y,h) & a_\pm(y, hD_y,h)\,,
\end{pmatrix},
\]
we define the Weyl vector valued $h$-symbol of $A$ as a function on
$\mathbb R^2$ whose value at $(y,\eta)\in \mathbb R^2$ is an
operator in $L^2(\mathbb R)\times \mathbb C$ given by
\[
A(y,\eta,h)=\begin{pmatrix} a_0(x,D_x,y,\eta,h) & a_-(x,D_x,y,\eta,h)R_-\\
R_+a_+(x,D_x,y,\eta,h) & a_\pm(y,\eta,h)
\end{pmatrix}.
\]

We observe that Grushin's problems $\mathcal P_h(z)$ and $\mathcal
Q(z)$ belong to $\mathfrak A $.

We take as the first approximate inverse for $\mathcal P_h(z) $ the
operator $\mathcal E^0_\chi(z)\in \mathfrak A$, whose Weyl vector
valued $h$-symbol is
\[
\mathcal E^0_\chi(x,D_x,y,\eta,z):=\chi(y,\eta)\mathcal
E^0(x,D_x,y,\eta,z),
\]
where $\mathcal E^0(x,D_x,y,\eta,z)$ is constructed in
Section~\ref{s:step2} and $\chi\in C^\infty(\mathbb R^2)$ such that
${\rm supp}\,\chi\subset \Omega_1$ and $\chi \equiv 1$ on $\Omega$
where $\Omega$ is an open neighborhood of $(0,0)$ in $\mathbb R^2$
such that $\overline\Omega\subset \Omega_1$. Then we get
\begin{equation}\label{e:E0-left:inverse}
\mathcal E^0_\chi(z) \circ \mathcal P_h(z) = I + \Sigma (z)\,,
\end{equation}
where $\Sigma(z)\in \mathfrak A$, $\Sigma(z)=\mathcal
O_\Omega(h^{1/2})$. One can easily see that
\[
\Sigma (z)=\sum_{j=1}^{+\infty}h^{j/2}\Sigma_j(z)+ \mathcal
O_\Omega(h^{\infty}),
\]
where
\begin{equation}\label{e:Sigma}
\Sigma_j = \mathcal E^0_\chi(z)  \circ \left(\begin{array}{cc} S_j&
0\\0 &0
\end{array} \right)=
\left(\begin{array}{cc} \chi\epsilon^0 \circ S_j &0\\
\chi\epsilon^0_+ \circ S_j&0
\end{array} \right) \,.
\end{equation}

Next, we construct the inverse of $I+\Sigma(z)$ in $\mathfrak A$.
Formally, using the Neumann series, we first get that
\[
(I+\Sigma(z))^{-1} \sim I + \mathcal T(z),
\]
where
\[
\mathcal T(z)\sim \sum_{j=1}^{+\infty}(-1)^j\Sigma(z)^j
\]
with $\Sigma(z)^j=\mathcal O_\Omega(h^{j/2})$, and then the inverse
$\mathcal E_h(z)$ of $\mathcal P_h(z)$ is obtained as
\[
\mathcal E_h(z)=\left(I + \mathcal T(z) \right)\circ \mathcal
E^0_\chi(z).
\]
The problem is that the order of $\Sigma(z)^j$ as a global
differential operator in $x$ goes to $+\infty$ as $j\to \infty$, so
the sum $\mathcal T(z)$, if existing, should be of infinite order in
$(x,D_x)$. Therefore, we cut the formal expansion for $\mathcal
T(z)$, choosing some natural $N$ and putting
\[
\mathcal T_N(z) : =\sum_{j=1}^{N-1}(-1)^j\Sigma(z)^j \in \mathfrak
A.
\]
We obtain that $\mathcal T_N(z)=\mathcal O_\Omega(h^{1/2})$ and $I +
\mathcal T_N(z)$ is the inverse of $I+\Sigma(z)$ in $\mathfrak A$
modulo $\mathcal O_\Omega(h^{N/2})$:
\[
(I + \Sigma(z))\circ (I + \mathcal T_N(z))=(I + \mathcal
T_N(z))\circ (I + \Sigma(z))=I+\Sigma(z)^N = I+ \mathcal
O_\Omega(h^{N/2}).
\]
Finally, we put
\begin{equation}\label{e:ENh}
\mathcal E^N_h(z):=\left(I + \mathcal T_N(z) \right)\circ \mathcal
E^0_\chi(z)\in \mathfrak A.
\end{equation}
So we obtain that
\begin{equation}\label{e:left-inverse}
\mathcal E^N_h(z)\circ \mathcal P_h(z)=\left(I + \mathcal T_N(z)
\right)\circ (I + \Sigma (z))=I+\mathcal O_\Omega(h^{N/2}).
\end{equation}
Thus, we have found a left inverse $\mathcal E^N_h(z)$ for the
operator $\mathcal P_h(z)$ in the algebra $\mathfrak A$ modulo
$\mathcal O_\Omega(h^{N/2})$.

Similarly, we can construct a right inverse for the operator
$\mathcal P_h(z)$ in the algebra $\mathfrak A$ modulo $\mathcal
O_\Omega(h^{N/2})$, which implies that $\mathcal E^N_h(z)$ is a
two-sided inverse for the operator $\mathcal P_h(z)$ in the algebra
$\mathfrak A$ modulo $\mathcal O_\Omega(h^{N/2})$:
\begin{equation}\label{e:right-inverse}
\mathcal P_h(z)\circ \mathcal E^N_h(z)=I+\mathcal
O_\Omega(h^{N/2})\,.
\end{equation}
Denote
\begin{equation}\label{forminv}
\mathcal E^N_h(z)= \left(
   \begin{array}{cc}
  \epsilon^N(z) & \epsilon^N_-(z)
  \\
\epsilon^N_+(z)&\epsilon^N_{\pm}(z)
  \end{array}
  \right)\,.
\end{equation}

If we write
\[
\mathcal E^N_h(z)\sim \mathcal E^N_0(z)+h^{\frac 12}\mathcal
E^N_1(z)+h \mathcal E^N_2(z) +\ldots,
\]
then we have
\[
\mathcal E^N_j(y,\eta,z)=\mathcal E_j(y,\eta,z), \quad (y,\eta)\in
\Omega,\quad j=1,2,\ldots,N-1,
\]
where $\mathcal E_j(z)$ are the coefficients of the formal
expansion:
\begin{equation}\label{e:Ehformal}
\mathcal E_h(z)\sim \mathcal E_0(z)+h^{\frac 12}\mathcal E_1(z)+h
\mathcal E_2(z) +\ldots.
\end{equation}
Writing the formal expansion $\mathcal E_h(z)$ in a block form:
\begin{equation}\label{blo1}
\mathcal E_h(z)= \left(
   \begin{array}{cc}
  \epsilon (z) & \epsilon_-(z)
  \\
\epsilon_+(z)&\epsilon_{\pm}(z)
  \end{array}
  \right)\,,
\end{equation}
we observe that $\epsilon_{\pm}(z)$ can be considered not only as a
formal series, but also as an $h$-pseudodifferential operator on
$L^2(\mathbb R)$:
\begin{equation}\label{blo2}
\epsilon_{\pm}(z)\sim \sum_{j=0}^\infty
h^{\frac{j}{2}}\epsilon_{j\pm}(y, hD_y,z).
\end{equation}
We have
\begin{equation}\label{e:epsilon-diff}
\epsilon_{\pm}(y, hD_y,z)=\epsilon^N_{\pm}(y, hD_y,z)+ \mathcal
O_\Omega(h^{N/2}),
\end{equation}
or equivalently
\[
\epsilon_{j\pm}(y,\eta,z)=\epsilon^N_{j\pm}(y,\eta,z), \quad
(y,\eta)\in \Omega,\quad j=1,2,\ldots,N-1.
\]

Let us show how to compute the first two coefficients of
\eqref{e:Ehformal}.
\medskip\par
{\bf The coefficient of $h^\frac 12$} is given by
\[
\mathcal E_1=-\Sigma_1\circ \mathcal E^0_\chi.
\]
Using \eqref{e:Sigma}, we get
\begin{align*}
\mathcal E_1 & = - \left(\begin{array}{cc} \chi\epsilon^0\circ S_1 &0\\
\chi\epsilon^0_+ \circ S_1 &0
\end{array}
\right) \circ \left(
\begin{array}{cc} \chi\epsilon^0 & \chi\epsilon^0_- \\
\chi\epsilon^0_+ & \chi\epsilon^0_\pm
   \end{array}
   \right)\\ & = -\left(
\begin{array}{cc} \chi\epsilon^0\circ S_1 \circ \chi\epsilon^0 &
\chi\epsilon^0\circ S_1 \circ \chi\epsilon^0_- \\
\chi\epsilon^0_+ \circ S_1 \circ \chi\epsilon^0 & \chi\epsilon^0_+
\circ S_1 \circ \chi\epsilon^0_-
   \end{array}
   \right) \,.
\end{align*}

So the correction $\epsilon_{1\pm}(y, hD_y,z)$ is a
$h$-pseudodifferential operator given by:
\[
\epsilon_{1\pm}= -R_+\chi(1- (\Theta_0 -\Theta_0(0,0)-z)W_0 U_0)
\circ S_1\circ \chi W_0 R_-\,.
\]
Since the operators $\Theta_0$, $W_0$ and $U_0$ respect the parity
and $S_1$ changes the parity in $x$, we obtain\footnote{This type of
argument appears in Sj\"ostrand \cite{Sj} who refers to Grushin
\cite{Gru}, and then  in the paper of B. Helffer \cite{He77} devoted
to the hypoellipticity with loss of $\frac 32$ derivatives.} that
\[
\epsilon_{1\pm}(y, hD_y,z)=0.
\]

{\bf The coefficient of $h$} is given by
\[
\mathcal E_2 =(\Sigma^2_1 -\Sigma_2 )\circ \mathcal E^0_\chi.
\]
Using \eqref{e:Sigma}, we get
\[
\mathcal E_2 = \left(\begin{array}{cc} \chi\epsilon^0  \circ
(S_1\circ \chi\epsilon^0
\circ S_1-S_2) &0\\
\chi\epsilon^0_+   \circ (S_1 \circ \chi\epsilon^0  \circ S_1 - S_2)
&0
\end{array} \right)
\circ \left(
\begin{array}{cc} \chi\epsilon^0 & \chi\epsilon^0_- \\
\chi\epsilon^0_+ & \chi\epsilon^0_\pm
   \end{array}
   \right)\,.
\]

The correction $\epsilon_{2\pm}(y, hD_y,z)$ is given by
\[
\epsilon_{2\pm}=\chi\epsilon^0_+ \circ (S_1 \circ \chi\epsilon^0
\circ S_1 - S_2)\circ \chi\epsilon^0_-\,.
\]

It looks rather difficult to compute this coefficient explicitly.
But of course, this is just a rather routine long computation. If we
are interested in the low lying eigenvalues, the approach used in
our previous work \cite{HKI} is better. Hence we will not pursue in
this direction.

\section{Grushin's problem and quasimodes}\label{s:Grushin-quasimodes}

By the Grushin method, we formally arrive at a statement of the type
$z\in \sigma(\widetilde T_h)$ is equivalent to $0\in \sigma(
\epsilon_{\pm}(y, h D_y,z;h))$. This kind of problem is treated in
\cite{HeSjharp3}, where the notion of $\mu$-spectrum is introduced
(see Definition 3.2).

Let us choose $\gamma_0 >0$ such that $\gamma_0<\alpha_0$ and the
set $\{(y,\eta)\in \mathbb R^2: \hat b(y,\eta)<b_0+\gamma_0\}$ is
connected and contained in $\Omega$.

By Proposition~\ref{p:E0}, we have
\[
\epsilon_{0\pm}(y,\eta,z) = q_0(y,\eta,z) (\hat b(y,\eta)-b_0-z) \,,
\]
where $q_0(y,\eta,z)$ is elliptic for $(y,\eta)\in \Omega$ and
$|z|<\alpha_0$. Let us extend $q_0$ to an elliptic semiclassical
symbol from $S(1)$. The operator $Q_0(z)=q_0(y,hD_y,z)$ is
invertible as an operator in $L^2(\mathbb R)$ and the inverse
$Q_0(z)^{-1}$ is an elliptic $h$-pseudodifferential operator.
Consider an $h$-pseudodifferential operator $p_{\rm eff}(z)=p_{\rm
eff}(y, hD_y,h, z)$ given by
\[
p_{\rm eff}(z)=Q_0(z)^{-1}\circ \epsilon_\pm (z).
\]
Then we have
\[
p_{\rm eff}(y, \eta ,h, z) \sim\sum_{j\in \mathbb N} p^j_{\rm
eff}(y,\eta,z) h^j\,,
\]
with
\[
p^0_{\rm eff}(y,\eta,z) = \hat b(y,\eta) - b_0 -z
\]
for $(y,\eta)\in \Omega$ and $|z|<\alpha_0$.

\subsection{The direct statement}\label{appendixD} Here
we follow Helffer-Sj\"ostrand (\cite{HeSjharp1,HeSjharp3}) for the
$1D$-problem and Fournais-Helffer \cite{FoHe}.

Choose $\gamma_0(h)\in (b_0,\alpha_0)$ defined for $h\in (0,h_0]$
such that $\gamma_0(h)\to \gamma_0$ as $h\to 0$ and there exists
$a(h)>0, h\in (0,h_0]$ such that $a(h)=\mathcal O(h^{N_0})$ and
\begin{equation}\label{e:gap}
\sigma(H^h)\cap (h(b_0+\gamma_0(h)),
h(b_0+\gamma_0(h)+a(h)))=\emptyset\,.
\end{equation}

Suppose that we have found $z=z(h)$, satisfying $|z(h)|<
\gamma_0(h)$ for $h\in (0,h_0]$ and the corresponding approximate
0-eigenfunction $u_h^{qm}\in C^\infty(\mathbb R)$ of the operator
$p_{\rm eff}(y, hD_y,h, z(h))$, i.e.
\[
p_{\rm eff}(z(h)) u_h^{qm} = \mathcal O (h^\infty)\,,
\]
such that $\|u_h^{qm}\|=1+\mathcal O (h^\infty)$ and the frequency
set of $u_h^{qm}$ is non-empty and contained in $\Omega\,$. Then
$u_h^{qm}$ is the approximate 0-eigenfunction of the operator
$\epsilon_\pm (z)$:
\[
\epsilon_\pm (z) u_h^{qm} = Q_0(z)(p_{\rm eff}(z) u_h^{qm}) =
\mathcal O (h^\infty)\,.
\]

Define the function $\psi_h\in \mathcal S^2(\mathbb R^2)$ by
\[
\psi_h=\epsilon^N_{-}(z) u_h^{qm}, \quad h\in (0,h_0].
\]
Using the fact that $\mathcal E^N_h(z)$ is the right inverse for
$\mathcal P_h(z)$ in $\mathfrak A$ modulo $\mathcal
O_\Omega(h^{N/2})$, by \eqref{e:right-inverse}, we obtain that
\[
(\widetilde {T}_h -b_0- z(h)) \epsilon^N_- (z)+ R_- \epsilon^N_{\pm}
(z)=K_-\in \mathcal O_\Omega(h^{N/2})\,.
\]
We get
\[
(\widetilde {T}_h -b_0- z(h)) \psi_h + R_- \epsilon^N_{\pm}
(z)u_h^{qm}=K_-u_h^{qm}\,.
\]
Since the frequency set of $u_h^{qm}$ is contained in $\Omega$ and
$K_-\in \mathcal O_\Omega(h^{N/2})$, we have
\[
K_-u_h^{qm}= \mathcal O(h^{N/2}).
\]
By \eqref{e:epsilon-diff} and the fact that the frequency set of
$u_h^{qm}$ is contained in $\Omega$, we also have
\[
\epsilon^N_{\pm} (z)u_h^{qm}=\epsilon_{\pm} (z)u_h^{qm}+\mathcal
O(h^{N/2})=\mathcal O(h^{N/2}).
\]
So we arrive at
\[
(\widetilde {T}_h -h^{-1}\mu_h) \psi_h= \mathcal O(h^{N/2})\,,
\]
where $\mu_h=h(b_0+z(h))\in [hb_0,h(b_0+\gamma_0(h)]$.

To control the norm of $\psi_h$, using \eqref{e:ENh} and the fact
that the frequency set of $u_h^{qm}$ is contained in $\Omega$, we
observe that
\[
\|\psi_h\|=\|\epsilon^0_- (z)u_h^{qm}\|(1+\mathcal O(h^{1/2})).
\]
Then we see that
\[
\epsilon^0_- (z)=W_0 R_-=(I + U_0\circ
(\Theta_0(y,hD_y)-\Theta_0(0,0))- z U_0)^{-1}R_-.
\]
Therefore
\[
\|\psi_h\|=\|\epsilon^0_- (z)u_h^{qm}\|\geq
C\|R_-u_h^{qm}\|=C\|u_h^{qm}\|=C(1+\mathcal O (h^\infty)).
\]
Put
\[
\widetilde v_h=\frac{\psi_h}{\|\psi_h\|}.
\]
Then we have $\|\widetilde v_h\|=1$ and
\[
(\widetilde {T}_h -h^{-1}\mu_h) \widetilde v_h=\mathcal
O(h^{N/2}).
\]
Coming back to the initial variables (see
Subsection~\ref{s:frequency}), we obtain a function $v_h$ such that
$\|v_h\|=1$ and
\[
(H^h-\mu_h)v_h=\mathcal O(h^{N/2+1})\,.
\]
Since $N$ is arbitrary, by Spectral Theorem, for any $h\in (0,h_0]$,
there exists $\lambda_h \in \sigma (H^h)\cap [hb_0, h (b_0
+\gamma_0(h)))$ such that
\[
\lambda_h-\mu_h=\lambda_h-h(b_0+z(h))=\mathcal O(h^\infty).
\]
By \eqref{e:gap}, it follows that $\lambda_h\in
[hb_0,h(b_0+\gamma_0(h)]$.

\subsection{The converse statement}
This time we start from an $L^2$ eigenfunction $u_h$ of $H^h$
associated with $\lambda_h\in [hb_0,h(b_0 +\gamma_0)]$ for any $h\in
(0,h_0]$ with $\gamma_0>0$ as above. The aim is to construct an
approximate 0-eigenfunction for the operator $p_{\rm eff}(z)$ with
$z(h)=\frac{1}{h}(\lambda_h-hb_0)$.

Performing metaplectic transformations as in
Subsection~\ref{s:metaplectic}, we arrive at an $L^2$ eigenfunction
$\widetilde u_h$ of $\widetilde{T}_h$ associated with
$h^{-1}\lambda_h$. Now we fix some natural $N$ and use the fact that
the operator $\mathcal E^N_h(z)\in \mathfrak A$ is the left inverse
for $\mathcal P_h(z)$ in $\mathfrak A$ modulo $\mathcal
O_\Omega(h^{N/2})$. In particular, \eqref{e:left-inverse} reads:
\begin{equation}\label{E.2}
\epsilon^N_+(z) (\widetilde{T}_h-h^{-1}\lambda_h) +
\epsilon^N_\pm(z) R_+=K_+\,,
\end{equation}
where $K_+=R_+a_+(x,D_x,y, hD_y,h)$ with $a_+\in \operatorname{Op}
S^{0,*}[h]$, $a_+ = \mathcal O_\Omega(h^{N/2})$.

By definition, we can write
\[
a_+(x,D_x,y,hD_y,h)= A_{N-1}(h)+R_{N-1}(h),
\]
where
\[
A_{N-1}(h)=\sum_{j=0}^{N-1} h^{\frac j2}a_j(x,D_x,y,hD_y),
\]
each $a_j$ belongs to $S^{0,m+j} (\mathbb R^2\times\mathbb R^2)\,$
and can be represented as a finite sum
\[
a_j(x,D_x,y,hD_y)=\sum  b^{(j)}_\ell (x,D_x) c^{(j)}_\ell (y,hD_y)
\]
with some $b^{(j)}_\ell\in S^{m+j}(\mathbb R)$ and $c^{(j)}_\ell\in
S(1)$, and $R_{N-1}(h)$ satisfies \eqref{e:RNh}. Moreover,
$c^{(j)}_\ell(y,\eta)=0$ for any $(y,\eta)\in\Omega$ and
$j=0,1,\ldots,N-1$.

We know that $\widetilde u_h=S^h\hat u_h$, where $S^h$ is a unitary
operator in $L^2(\mathbb R^2)$ given by
$S^hf(x,y)=h^{1/4}f(h^{1/2}x,y)$. It is easy to see that
\[
c^{(j)}_\ell (y,hD_y)\circ S^h = S^h\circ c^{(j)}_\ell (y,hD_y).
\]
Therefore, we have
\[
R_+A_{N-1}(h)\widetilde  u_h=\sum R_+  b^{(j)}_\ell (x,D_x)S^h
c^{(j)}_\ell (y,hD_y) \hat u_h\,.
\]
Since the frequency set of $\hat  u_h$ is contained in $\Omega$, we
have
\[
c^{(j)}_\ell (y,hD_y) \hat u_h=\mathcal O(h^{N/2}).
\]
Since the operator $R_+ b^{(j)}_\ell (x,D_x)S^h$ is uniformly
bounded in $h$ as as operator from $L^2(\mathbb R^2)$ to
$L^2(\mathbb R)$, we obtain that
\begin{equation}
\label{e:AN} R_+A_{N-1}(h)\widetilde  u_h=\mathcal O(h^{N/2}).
\end{equation}
Using \eqref{e:RNh} and \eqref{control1}, we conclude that
\begin{equation}
\label{e:RN} R_{N-1}(h)\widetilde  u_h=\mathcal O(h^{N/2}).
\end{equation}
By \eqref{e:AN} and \eqref{e:RN}, we obtain that
\begin{equation}
\label{e:K+} K_+\widetilde  u_h=\mathcal O(h^{N/2}).
\end{equation}
By \eqref{E.2} and \eqref{e:K+}, we have
\begin{equation}
\label{e:R+} \epsilon^N_\pm(z) R_+ \widetilde  u_h= \mathcal
O(h^{N/2}).
\end{equation}
It remains to show that $R_+ \widetilde  u_h$ is not too small in
order to get effectively a quasi-mode. For this,  we need first the
following proposition:
\begin{prop}\label{p:frequency}
Let $v_h, h\in (0,h_0],$ be a family of functions in $L^2(\mathbb
R^2)$ such that $\|v_h\|= 1+\mathcal O(h)$. Let
\[
\Phi_h(y):= h^{-1/4} \int_{-\infty}^{+\infty} e^{-
\frac{x^2}{2b_0h}} \, v_h (x,y) dx\,.
\]
Then the frequency set of $\Phi_h$ is contained in the set of
$(y,\eta)$ such that $(0,0,y,\eta)$ belongs to the frequency set of
$v_h$.
\end{prop}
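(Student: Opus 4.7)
I would apply the pseudodifferential characterisation of the frequency set recalled in Subsection~\ref{s:frequency}. Suppose $(0,0,y_0,\eta_0) \notin F[v_h]$; the objective is to show $(y_0,\eta_0) \notin F[\Phi_h]$. Shrinking supports if necessary, one can find compactly supported symbols $\alpha(x,\xi)$ with $\alpha \equiv 1$ on a neighbourhood of $(0,0)$ and $\chi_0(y,\eta)$ with $\chi_0(y_0,\eta_0) \neq 0$ such that
\[
\alpha(x,hD_x)\,\chi_0(y,hD_y)\, v_h = \mathcal O(h^\infty) \quad \text{in } L^2(\mathbb R^2).
\]
It is then enough to prove $\chi_0(y,hD_y)\Phi_h = \mathcal O(h^\infty)$ in $L^2(\mathbb R_y)$.

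Introduce the partial integral operator $K \colon L^2(\mathbb R^2_{x,y}) \to L^2(\mathbb R_y)$ defined by $Kv(y) = h^{-1/4}\int g_h(x)\, v(x,y)\,dx$ with $g_h(x) = e^{-x^2/(2b_0 h)}$, so that $\Phi_h = K v_h$; $K$ is uniformly bounded in $h$ by Cauchy--Schwarz. Since $K$ acts only in $x$ and $\chi_0(y,hD_y)$ only in $y$, they commute, hence
\[
\chi_0(y,hD_y)\Phi_h = K\bigl(\alpha(x,hD_x)\chi_0(y,hD_y) v_h\bigr) + K\bigl((I-\alpha(x,hD_x))\chi_0(y,hD_y) v_h\bigr).
\]
The first term is $\mathcal O(h^\infty)$ by hypothesis together with the boundedness of $K$.

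For the second term, the crucial point is that $g_h$ is a semi-classical Gaussian coherent state whose $h$-frequency set reduces to $\{(0,0)\}$. Concretely, I claim that
\[
(I-\alpha(x,hD_x))^* g_h = \mathcal O(h^\infty) \quad \text{in } L^2(\mathbb R_x).
\]
Granting this, transferring the factor $(I-\alpha(x,hD_x))$ onto $g_h$ by duality in $x$, then Cauchy--Schwarz in $x$ followed by integration in $y$ yield
\[
\|K(I-\alpha(x,hD_x)) v\|_{L^2_y} \leq h^{-1/4}\|(I-\alpha(x,hD_x))^* g_h\|_{L^2_x}\,\|v\|_{L^2(\mathbb R^2)} = \mathcal O(h^\infty)\|v\|_{L^2}.
\]
Applied to $v = \chi_0(y,hD_y) v_h$, which is bounded in $L^2$ since $\|v_h\| = 1 + \mathcal O(h)$, this gives the required decay.

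The technical step requiring real work is the coherent-state estimate above. I would prove it via $h$-Fourier inversion, writing
\[
g_h(x) = \sqrt{\tfrac{b_0}{2\pi h}}\int e^{ih^{-1}x\xi}\, e^{-b_0\xi^2/(2h)}\,d\xi,
\]
applying $(I-\alpha(x,hD_x))^*$ under the integral, and splitting the $\xi$-integration into $|\xi|\leq\delta$ (where the rescaling $\xi=\sqrt h\,\zeta$, the vanishing of $1-\alpha$ and all its derivatives at $(0,0)$, and non-stationary phase in $x$ give arbitrary decay) and $|\xi|>\delta$ (where the Gaussian $e^{-b_0\xi^2/(2h)}$ directly provides $\mathcal O(h^\infty)$). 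This is the main obstacle, although it is essentially standard in the theory of semiclassical coherent states.
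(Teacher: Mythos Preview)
Your argument is correct, but it proceeds along a different axis than the paper's. The paper works directly with the Fourier-transform definition of the frequency set: after localising $v_h$ in space near $(0,y_0)$ by means of the Gaussian decay, it writes the identity
\[
\langle e^{-ih^{-1}y\eta},\Phi_h\rangle_y \;=\; \int_{\mathbb R} e^{-b_0\xi^2/(2h)}\,\langle e^{-ih^{-1}(x\xi+y\eta)},v_h\rangle_{x,y}\,d\xi,
\]
obtained by Fourier inversion for the Gaussian in $x$. The $\xi$-integral is then split at $|\xi|=\varepsilon_0$: for small $\xi$ the inner bracket is $\mathcal O(h^\infty)$ by the hypothesis on $F[v_h]$, while for $|\xi|\geq\varepsilon_0$ the factor $e^{-b_0\xi^2/(2h)}$ is itself $\mathcal O(h^\infty)$.

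Your route uses instead the pseudodifferential characterisation and the coherent-state localisation of $g_h$. This is conceptually clean and packages the analysis into two modular facts (product-form microlocal cutoffs exist; $g_h$ is microlocally concentrated at the origin), at the cost of invoking slightly more machinery. The ``real work'' you single out --- proving $(I-\alpha(x,hD_x))^* g_h=\mathcal O(h^\infty)$ via Fourier representation and a $|\xi|\leq\delta$ versus $|\xi|>\delta$ split --- is in fact the same decomposition the paper performs on the other side of the duality; so the two arguments converge at their core. The paper's version has the advantage of being entirely elementary (no symbolic calculus beyond the definition), while yours would transplant more readily to a setting where the integral kernel in $x$ is a general semiclassical state rather than an explicit Gaussian.
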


\begin{proof}
Fix $(y_0,\eta_0)\in \mathbb R^2$. Suppose that $\chi\in
C^\infty_c(\mathbb R^2)$ be such that $\chi\equiv 1$ in a
neighborhood of $(0,y_0)$. Then for any $y$ in some neighborhood of
$y_0$ we have as $h\to 0$
\[
\int_{-\infty}^{+\infty} e^{- \frac{x^2}{2b_0h}} \, v_h (x,y)
dx=\int_{-\infty}^{+\infty} e^{- \frac{x^2}{2b_0h}} \, \chi(x,y) v_h
(x,y) dx+ \mathcal O(h^\infty).
\]
Therefore, without loss of generality, we can assume that $v_h$ is
supported in a (arbitrary small) neighborhood of $(0,y_0)$. Then
$\Phi_h$ is supported in a neighborhood of $y_0$ and we have the
formula
\[
\langle e^{-ih^{-1}y\eta},\Phi_h(y)\rangle_y
=\int_{-\infty}^{+\infty} e^{- \frac{b_0\xi^2}{2h}} \,\langle
e^{-ih^{-1}(x\xi+y\eta)}, v_h (x,y)\rangle_{x,y} d\xi.
\]
Thus if $(0,0,y_0,\eta_0)$ is not in the frequency set of $v_h$,
then, by definition, there exist $\varepsilon_0>0$ and a
neighborhood $V_{\eta_0}$ of $\eta_0$ in $\mathbb R$ such that
\[
\langle e^{-ih^{-1}(x\xi+y\eta)}, v_h (x,y)\rangle_{x,y}=\mathcal
O(h^\infty), \quad h\to 0,
\]
uniformly on $\xi$, $|\xi|<\varepsilon_0$ and $\eta\in V_{\eta_0}$,
that immediately implies  (observing that for $|\xi|\geq \epsilon_0$
the contribution is exponentially small) that
\[
\langle e^{-ih^{-1}y\eta},\Phi_h(y)\rangle_y =\mathcal O(h^\infty),
\quad h\to 0,
\]
uniformly on $\eta\in V_{\eta_0}$, and, therefore, $(y_0,\eta_0)$ is
not in the frequency set of $\Phi_h$.
\end{proof}

Let us rewrite the formula
\[
R_+ \widetilde u_h(y)=\pi^{-1/4}b_0^{-1/2} \int_{-\infty}^{+\infty}
e^{-\frac{x^2}{2b_0}} \widetilde u_h(x,y)dx
\]
in the form
\[
R_+ \widetilde u_h(y)=\pi^{-1/4}b_0^{-1/2}
h^{-1/4}\int_{-\infty}^{+\infty} e^{-\frac{x^2}{2b_0h}} \widehat
u_h(x,y)dx,
\]
which corresponds simply to the change of variable $x=h^{-1/2}\tilde
x$ in the integral. Here we note that $$ \tilde u_h (x,y) = h^{\frac
14} \widehat u_h (h^{\frac 12} x,y)\,.$$

Now we apply Proposition~\ref{p:frequency} with $v_h=\widehat u_h$.
By \eqref{e:freq-set}, we know that the frequency set of $v_h$ is
contained in $ \{(x,y,\xi,\eta)\in T^*{\mathbb R}^2 \,|\,
(x,\xi)=(0,0),\, \hat b(y,\eta)<b_0+\gamma_0\}$.
Proposition~\ref{p:frequency} implies that the frequency set of $R_+
\widetilde u_h$ is contained in $ \{(y,\eta)\in T^*{\mathbb R} \,|\,
\hat b(y,\eta)<b_0+\gamma_0\}$. Using this fact,
\eqref{e:epsilon-diff} and \eqref{e:R+}, we obtain that
\[
\epsilon_\pm(z) R_+ \widetilde  u_h=\epsilon^N_\pm(z) R_+ \widetilde
u_h+\mathcal O(h^{N/2})=\mathcal O(h^{N/2}).
\]

To control the norm of $R_+ \widetilde  u_h\,$, we use
\eqref{e:E0-left:inverse} and write
\[
\epsilon^0(z) (\widetilde{T}_h-h^{-1}\lambda_h) + \epsilon^0_-(z)
R_+=I+\Sigma^0\,,
\]
where $\Sigma^0\in \operatorname{Op} S^{0,*}[h]$, $\Sigma^0 =
\mathcal O_\Omega(h^{1/2})$. Applying this identity to $\widetilde
u_h$, we obtain that
\[
\epsilon^0_-(z) R_+\widetilde u_h=\widetilde u_h+\Sigma^0\widetilde
u_h\,.
\]
As above (see the proof of \eqref{e:K+}), one can show that
\[
\Sigma^0\widetilde u_h=\mathcal O(h^{1/2}).
\]
Therefore, we have
\[
\|\epsilon^0_-(z) R_+\widetilde u_h\|=1+\mathcal O(h^{1/2})\,.
\]
Since $\epsilon^0_-(z)$ is bounded  as an operator from
$L^2(\mathbb R)$ into $L^2(\mathbb R^2)$ uniformly on $h\in (0,1]$,
this implies that there exist $h_0>0$ and $C>0$ such that, for any
$h\in (0,h_0]$,
\[
\|R_+\widetilde u_h\|> C\,.
\]
Thus, the function
\[
u_h^{qm}=\frac{R_+ \widetilde  u_h}{\|R_+ \widetilde  u_h\|}
\]
is the approximate $L^2$ normalized eigenfunction for
$\epsilon_{\pm} (z)$:
\[
\epsilon_\pm(z) u_h^{qm}= \mathcal O(h^{N/2}), \quad \|u_h^{qm}\|=1\,.
\]
Since $N$ is arbitrary, we obtain that
\[
\epsilon_\pm(z) u_h^{qm}= \mathcal O(h^{\infty}),
\]
and
\[
p_{\rm eff}(z) u_h^{qm}=Q_0(z)^{-1}(\epsilon_\pm(z) u_h^{qm})=
\mathcal O(h^{\infty})\,.
\]

\section{Concluding remarks}\label{s7}
In the treatment of the Grushin problem in Section~\ref{s:Grushin},
one can distinguish two parts. The first part is a purely formal
computation of vector-valued $h$-symbols in $(y,\eta)$ at the level
of complete formal expansions in powers of $h$ where we use the
formal Weyl composition law $\#_h$. The second part is a way of
associating with these symbols well defined operators on a Hilbert
space. This forces us to introduce a cut-off function $\chi$ and to
consider finite sums (choice of $N$) instead of formal infinite
sums. It is only in the last part that the choice of the Grushin
problem with $R_-$ and $R_+$ is useful.

If we are interested in the explicit computation of symbols, it is
better, in the spirit of what we have done in Subsection \ref{ss5},
to consider instead the Grushin problem associated with the pair
$(R_-^1, R_+^1)$ introduced in \eqref{defR-yeta}. Now if we make the
construction done in Subsection \ref{ss6} with this pair (taking
$\chi=1$, $N=+\infty$ and using the formal Weyl composition law
$\#_h$ instead of the composition of operators $\circ$), we will
arrive at an explicit formal inverse $\mathcal
E^{(1),\infty}(y,\eta;z)$ in the same form as in \eqref{blo1},
defined for any $(y,\eta)$ in $\Omega_{\eta_0}:=\{ (y,\eta)\,,\,
\hat b(y,\eta) < \eta_0\}$,
\[
\mathcal E_h^{(1),\infty}(y,\eta;z)= \left(
   \begin{array}{cc}
  \epsilon^{(1),\infty} (y,\eta;z) & \epsilon^{(1),\infty}_-(y,\eta;z)
  \\
\epsilon_+^{(1),\infty}(y,\eta;z)&\epsilon_{\pm}^{(1),\infty}(y,\eta;z)
  \end{array}
  \right)\,,
\]
where $z$ should satisfy (see \eqref{condz}):
\[
%\label{condz2}
z \in I:= [0, \inf ( \eta_0, 2 b_0))\,,
\]
and assuming that $\hat b$ is $C^\infty$ in $\Omega_{\eta_0}$.

To relate $\epsilon_{\pm}^{(1),\infty}(y,\eta;z)$ with
$\epsilon_{\pm}(y,\eta;z)$ defined for $(y,\eta)\in \Omega$ in
\eqref{blo1}, we can proceed in the same spirit as in Subsection
\ref{ss5}, but this time working with the formal vector valued
$h$-symbols defined for $(y,\eta)\in \Omega$ and the formal Weyl
composition law $\#_h$. We get, for $(y,\eta)\in \Omega$ (cf.
\eqref{gp2}):
\[
(1+ \epsilon_+ \#_h (R^1_--R_-))^{-1} \#_h \epsilon_\pm \#_h (1-
(R^1_+ -R_+) \#_h \epsilon^{(1),\infty}_- )  =
\epsilon^{(1),\infty}_\pm \,.
\]
Now, the invertibility of $1+ \epsilon_+ \#_h (R^1_--R_-)$ is just a
question of looking at the principal symbol and this is exactly what
was done in Subsection \ref{ss5}.

Note that $\epsilon^{(1),\infty}_\pm(y,\eta,z)$ is defined for any
$(y,\eta,z)\in \Omega_{\eta_0}\times I$, not necessarily close to
$(0,0,0)$. This symbol can be theoretically computed following the
composition law of the symbols. Practically this remains to be
difficult and the computation of the subprincipal symbol of
$\epsilon^{(1),\infty}_\pm$ should involve the computation of $S_2$
at the end of Subsection \ref{s:dilation}.

We think that $\epsilon^{(1),\infty}_\pm$ is the right effective
Hamiltonian which could permit us to analyze the spectrum in $[hb_0,
h \inf (b_0+\eta_0, 3 b_0))$ like in \cite{RV} but we are at the
moment obliged to go back to the effective Hamiltonian
$\epsilon_{\pm}$ for technical reasons and hence have limited our
statements to the bottom of the spectrum.

\subsection*{Acknowledgement} The first author would like to thank N.
Raymond and S. Vu Ngoc for useful discussions around various
versions of \cite{RV}. These discussions were motivating for
improving preliminary versions of our paper. The second author would
also like to thank M. Karasev for useful remarks.

\end{document}